\newcommand{\bu}{\boldsymbol u}
\newcommand{\bx}{\boldsymbol x}
\newcommand{\by}{\boldsymbol y}
\newcommand{\bbR}{\mathbb R}
\newtheorem{remark}[theorem]{Remark}
\title{Fast symmetric factorization of hierarchical matrices with
  applications}
\author{ Sivaram Ambikasaran\thanks{Department of Computational \& Data Sciences, Indian Institute of Science, \texttt{sivaram@cds.iisc.ac.in}}
  \and Michael O'Neil\thanks{Courant Institute of Mathematical Sciences and Tandon School of Engineering, New York University, \texttt{oneil@cims.nyu.edu}}
  \and Karan Raj Singh\footnotemark[1]}
\definecolor{mycyan}{rgb}{0,0.95,0.95}
\pgfplotsset{compat=newest}
\begin{document}

\maketitle

\begin{abstract}
We present a fast direct algorithm for computing symmetric
factorizations, i.e. $A = WW^T$, of symmetric positive-definite hierarchical 
matrices with weak-admissibility
conditions. The computational cost for the symmetric factorization
scales as $\mathcal{O}(n \log^2 n)$ for hierarchically off-diagonal
low-rank matrices. Once this factorization is obtained, the cost for
inversion, application, and determinant computation scales as
$\mathcal{O}(n \log n)$. In particular, this allows for the near
optimal generation of correlated random variates in the case where $A$
is a covariance matrix.  This symmetric factorization algorithm
depends on two key ingredients. First, we present a novel symmetric
factorization formula for low-rank updates to the identity of the form
$I+UKU^T$. This factorization can be computed in $\mathcal{O}(n)$ time
if the rank of the perturbation is sufficiently small.  Second,
combining this formula with a recursive divide-and-conquer strategy,
near linear complexity symmetric factorizations for hierarchically
structured matrices can be obtained. We present numerical results for
matrices relevant to problems in probability \& statistics (Gaussian
processes), interpolation (Radial basis functions), and Brownian dynamics
calculations in fluid mechanics (the Rotne-Prager-Yamakawa tensor). 
\end{abstract}

\begin{keywords}
Symmetric factorization, Hierarchical matrix, Fast algorithms,
Covariance matrices, Direct solvers, Low-rank, Gaussian processes,
Multivariate random variable generation, Mobility matrix,
Rotne-Prager-Yamakawa tensor.
\end{keywords}

\begin{AMS}
15A23, 15A15, 15A09
\end{AMS}

\pagestyle{myheadings}
\thispagestyle{plain}
\markboth{Sivaram Ambikasaran and Michael O'Neil}{Fast symmetric factorization}

\section{Introduction}
\label{section_Introduction}
This article describes a computationally efficient method for
constructing the symmetric factorization of large dense matrices. The
symmetric factorization of large dense matrices is important in
several fields, including, among others, data
analysis~\cite{cichocki2009nonnegative,pauca2006nonnegative,wang2008multi},
geostatistics~\cite{matheron1963principles,wackernagel2003multivariate},
and hydrodynamics~\cite{geyer2009n2,jiang2013fast}.  For instance,
several schemes for multi-dimensional Monte Carlo simulations require
drawing covariant realizations of multi-dimensional random variables.
In particular, in the case where the marginal distribution of each
random variable is normal, the covariant samples can be obtained by
applying the {\em symmetric factor} of the corresponding covariance
matrix to independent normal random vectors. The symmetric
factorization of a symmetric positive-definite matrix is given as
the factor $W$ in $A = W^T W$.  One of the major computational
issues in dealing with large covariance matrices is that they are
often dense. Conventional methods of obtaining a symmetric
factorization based on the Cholesky decomposition are expensive, since
the computational cost scales as $\mathcal{O}(n^3)$ for an $n \times
n$ matrix. Relatively recently, however, it has been observed that
large dense (full-rank) covariance matrices can be efficiently
represented using hierarchical
decompositions~\cite{ambikasaran2014fast, ambikasaran2013large,
  ambikasaran2013fastBayes, ambikasaran2012application, li2014kalman,
  chendata}. Taking advantage of this underlying structure, we derive
a novel symmetric factorization for large dense Hierarchical
Off-Diagonal Low-Rank (HODLR) matrices that scales as $\mathcal{O}(n
\log^2 n)$. That is to say, for a given $n\times n$ matrix $A$, we
decompose it as $A = WW^T$.  A major difference of our scheme versus
the Cholesky decomposition is the fact that the matrix $W$ is no
longer a triangular matrix. In fact, the algorithm of this paper
constructs the matrix $W$ as a product of
matrices which are block low-rank updates of the identity matrix. The
cost of applying the resulting factor $W$ to a vector scales as
$\mathcal{O}(n \log n)$.

Hierarchical matrices were first introduced in the context of integral
equations~\cite{hackbusch1999sparse,hackbusch2000sparse} arising out
of elliptic partial differential equations and potential theory. Since
then, it has been observed that large classes of dense matrices
arising out of boundary integral equations~\cite{ying2009fast,lai2014fast,ambikasaran2014ifmm}, dense
fill-ins in finite element
matrices~\cite{xia2009superfast,aminfar2014fast}, radial basis
function interpolation~\cite{ambikasaran2013fast}, kernel density
estimation in machine learning, covariance structure in statistical
models~\cite{chendata}, Bayesian inversion~\cite{ambikasaran2013fast,
  ambikasaran2013large, ambikasaran2013fastBayes}, Kalman
filtering~\cite{li2014kalman}, and Gaussian
processes~\cite{ambikasaran2014fast} can be efficiently represented as
data-sparse hierarchical matrices. After a suitable ordering of
columns and rows, these matrices can be recursively sub-divided using
a tree structure. Certain sub-matrices at each level in the tree
can then be well-represented by low-rank matrices. We refer the readers
to~\cite{hackbusch1999sparse, hackbusch2000sparse,
  grasedyck2003construction, hackbusch2002data, borm2003hierarchical,
  chandrasekaran2006fast, chandrasekaran2006fast1,
  ambikasaran2013thesis,ambikasaran2014ifmm} for more details on these matrices.
Depending on the tree structure and low-rank approximation technique,
different hierarchical decompositions exist. For example, the original
fast multipole method~\cite{greengard1987fast} (from now on abbreviated as FMM) accelerates the
calculation of long-range gravitational forces for $n$-body problems
by hierarchically compressing (via a quad- or oct-tree) certain
interactions in the associated matrix operator using analytical
low-rank considerations. The low-rank sparsity structure of these
hierarchical matrices can be exploited to construct fast dense linear
algebra schemes, including direct inversion, determinant computation,
symmetric factorization, etc. Broadly speaking, different hierarchical
matrices can be divided into categories based on two main criteria:
(i) Admissibility (strong and weak), (ii) Nested low-rank basis. The admissibility criterion identifies and specifies sub-blocks of the hierarchical matrix that can be represented as a low-rank matrix, while the nested low-rank basis enables additional compression of the low-rank sub-matrices, by assuming that the low-rank basis of a parent can be obtained using the low-rank basis of its children. (For instance, the difference between ``FMMs" and ``tree-codes'' is that
there are no translation operators between one level to the other in
the case of tree-codes). A detailed analysis of these different hierarchical structures is
provided in~\cite{ambikasaran2013thesis,ambikasaran2014ifmm}. We also
refer readers to~\cite{hackbusch2004hierarchical} for a thorough
discussion and analysis of hierarchical matrices with weak
admissibility criteria.

Most of the existing results relevant to the symmetric factorization of
low-rank modifications to the identity are based on rank $1$ or rank
$r$ modifications to the Cholesky factorization, which are
computationally expensive, i.e., their scaling is at least
$\mathcal{O}(rn^2)$. We do not seek to review the entire literature
here, except to direct the readers to a few references~\cite{seeger2007low, higham1986newton, gill1974methods,
  bjorck1983schur}.

The scheme presented here for HODLR matrices scales as $\mathcal{O}(n
\log^2 n)$. The algorithm also extends to other hierarchical
structures naturally. However, for matrices with strong admissibility
criteria, the algorithm scales depending on the underlying dimension
as illustrated in Section~\ref{section_numerical}. Xia and
Gu~\cite{xia2010robust} also discuss a Cholesky factorization for
hierarchical matrices (Hierarchically Semi-Separable matrices to be
specific, which are also termed as $\mathcal{H}^2$ matrices with weak
admissibility criteria~\cite{hackbusch2000h2}). Once the HSS
factorization is computed, their algorithm scales as $\mathcal{O}(n)$.
As presented, the cost of constructing the HSS representation in their
article scales as $\mathcal{O}(n^2)$. It is possible to reduce the
computational cost of forming an HSS representation to $\mathcal{O}(n)$,
for instance, using randomized algorithms as discussed
  in~\cite{martinsson2011fast}. The HODLR
  matrices have a weaker assumption on the off-diagonal blocks as
  opposed to HSS matrices and therefore algebraic techniques for
  assembling HODLR matrices are simpler and easier. A comparison of
  the computational cost for each step of our algorithm and the
  algorithm by Xia and Gu~\cite{xia2010robust} is presented in
  Table~\ref{table_computational_costs}.

\begin{table}[!htbp]
\begin{center}
\caption{Computational cost of each of the significant step in symmetric factorization}
\begin{tabular}{|c|c|c|c|c|c|}
\hline
& Matrix type & Algorithm & Assembly & Factorization & Solve\\
\hline
Our method & HODLR & Symmetric factorization & $\mathcal{O}(n \log n)$ & $\mathcal{O}(n \log^2 n)$ & $\mathcal{O}(n \log n)$\\
\hline
Xia and Gu~\cite{xia2010robust} & HSS & Cholesky & $\mathcal{O}(n^2)$ & $\mathcal{O}(n)$ & $\mathcal{O}(n)$\\
\hline
\end{tabular}
\end{center}
\label{table_computational_costs}
\end{table}

The paper is organized as follows:
Section~\ref{section_symfactor_lowrank} contains the key idea behind
the algorithm discussed in this paper: a fast, symmetric factorization
for low-rank updates to the identity.
Section~\ref{section_hierarchical} extends the formula of
Section~\ref{section_symfactor_lowrank} to a nested product of block
low-rank updates to the identity. The details of the compatibility of
this structure with HODLR matrices is
discussed. Section~\ref{section_numerical} contains numerical results
(accuracy and complexity scaling) of applying the factorization
algorithms to matrices relevant to problems in statistics,
interpolation and hydrodynamics. Section~\ref{section_conclusion}
summarizes the previous results and discusses further extensions and
areas of ongoing research. The algorithm discussed in this article has been implemented in C++ (parallelized using OpenMP) and the implementation is made available at \url{https://gitlab.com/SAFRAN/HODLR}.

\section{Acknowledgements} The research supported in part by the Air Force
    Office of Scientific Research under NSSEFF Program Award
    FA9550-10-1-0180. SA was also supported by the INSPIRE faculty award [DST/INSPIRE/04/2014/001809] by the Department of Science \& Technology, India and the startup grant provided by the Indian Institute of Science.
\section{Symmetric Factorization of low-rank update}
\label{section_symfactor_lowrank}

Almost all of the hierarchical factorizations are typically based on
incorporating low-rank perturbations in a hierarchical manner. In this
section, we briefly discuss some well-known identities which allow for
the rapid inversion and determinant computation of low-rank updates to
the identity matrix.

\subsection{The Sherman-Morrison-Woodbury formula}
If the inverse of a matrix $A \in \mathbb R^{n\times n}$ is already
known, then the inverse of subsequent low-rank updates, for $U, V \in
\mathbb R^{n\times p}$ and $C \in \bbR^{p \times p}$, can be
calculated as
\begin{align}
\left( A + UCV^T \right)^{-1} = A^{-1} - A^{-1} U \left( C^{-1} + V^T A^{-1}U
\right)^{-1} V^T A^{-1},
\end{align}
where we should point out that the quantity $V^TU$ is only a $p \times
p$ matrix. This formula is known as the Sherman-Morrison-Woodbury
(SMW) formula. Further simplifying, in the case where $A = I$, we have
\begin{align}
\label{equation_Sherman_Morrison_Woodbury}
\left(I + U C V^T \right)^{-1} = I - U (C^{-1} + V^T U)^{-1}V^T.
\end{align}
Note that the SMW formula shows that the inverse of a low-rank
perturbation to the identity matrix is also a low-rank perturbation to
the identity matrix. Furthermore, the row-space and column-space of
the low-rank perturbation and its inverse are the same. The main
advantage of Equation~\eqref{equation_Sherman_Morrison_Woodbury} is
that if $p \ll n$, we can obtain the inverse (or equivalently solve a
linear system) of a rank $p$ perturbation of an $n \times n$ identity
matrix at a computational cost of $\mathcal{O}(p^2n)$. In general, if
$B \in \mathbb{R}^{n \times n}$ is a low-rank perturbation of $A \in
\mathbb{R}^{n \times n}$, then the inverse of $B$ is also a
low-rank perturbation of the inverse of $A$.

It is also worth noting that if $A$ and $B$ are well-conditioned, then
the Sherman-Morrison-Woodbury formula is numerically
stable~\cite{yip1986note}. The SMW formula has found applications in,
to name a few, Kalman filters~\cite{mandel2006efficient}, recursive
least-squares~\cite{geist2010statistically}, and fast direct solvers
for hierarchical matrices~\cite{ambikasaran2013fast, starr1991numerical, lyons2005fast}.

\subsection{Sylvester's determinant theorem}
Calculating the determinant of an $n \times n$ matrix $A$,
classically, using a cofactor expansions requires $\mathcal O(n!)$
operations. However, this can be reduced to $\mathcal O(n^3)$
by first computing the $LU$ or eigenvalue decomposition of the
matrix.
Recently~\cite{ambikasaran2014fast}, it was shown that the
determinant of HODLR matrices could be calculated in $\mathcal O(n
\log n)$ time using Sylvester's determinant
theorem~\cite{akritas1996various}, a formula relating the determinant
of a low-rank update of the identity to the determinant of a smaller
matrix. Determinants of matrices are very important in probability and
statistics, in particular in Bayesian inference, as they often serve
as the normalizing factor in likelihood calculations and in the
evaluation of the conditional evidence.

Sylvester's determinant theorem states that for $A,B \in \mathbb R^{n
  \times p}$,
\begin{align}
\label{equation_Sylvester_Determinant}
\det \left(I + AB^T \right) = \det \left(I + B^T A \right),
\end{align}
where the determinant on the right hand side is only of a $p \times p$
matrix.  Hence, the determinant of a rank $p$ perturbation to an $n
\times n$ identity matrix, where $p \ll n$, can be computed at a
computational cost of $\mathcal{O}(p^2n)$. This formula has recently
found applications in Bayesian statistics for computing precise values
of Gaussian likelihood functions (which depend on the determinant of
the corresponding covariance matrix)~\cite{ambikasaran2014fast} and
computing the determinant of large matrices in random matrix
theory~\cite{tao2012topics}.

\subsection{Symmetric factorization of a low-rank update}

In the spirit of the Sherman-Morrison-Woodbury formula and Sylvester's
determinant theorem, we now obtain a formula that enables the symmetric
factorization of a rank $p$ perturbation to the $n \times n$ identity
at a computational cost of $\mathcal{O}(p^2n)$. In particular, for a
symmetric positive-definite (from now on abbreviated as SPD) matrix of
the form $I + UKU^T$, where $I$ is an $n \times n$ identity matrix, $U
\in \mathbb{R}^{n \times p}$, $K \in \mathbb{R}^{p \times p}$, and $p
\ll n$, we obtain the factorization
\begin{align}
I + UKU^T = W W^T.
\end{align}
We now state this as the following theorem.

\begin{theorem}
\label{thm_main}
For rank $p$ matrices $U \in \bbR^{n \times p}$ and $K \in \bbR^{p \times p}$, if the matrix $I + UKU^T$ is SPD then it can be symmetrically factored as
\begin{align}
\label{eq_main}
I+UKU^T = \left(I+UXU^T\right) \left(I+UXU^T\right)^T
\end{align}
where $X$ is obtained as
\begin{align}
X &= L^{-T} \left( M - I \right) L^{-1},
\label{equation_X}
\end{align}
the matrix $L$ is the symmetric factor of $U^TU$, and $M$ is the symmetric
factor of $I+L^TKL$, i.e.,
\begin{align}
LL^T &= U^T U,\\
MM^T &= I + L^T K L.
\label{equation_MMT}
\end{align}
\end{theorem}
We first prove two lemmas related to the construction of $X$ in
Equation~\eqref{equation_X}, which directly lead to the proof of
Theorem~\ref{thm_main}. In the subsequent discussion, we will assume
the following unless otherwise stated:
\begin{enumerate}
\item $I$ is the identity matrix.
\item $p \ll n$.
\item $U \in \bbR^{n \times p}$ is of rank $p$.
\item $K \in \mathbb{R}^{p \times p}$ is of rank $p$.
\item $I + UKU^T$ is SPD.
\end{enumerate}
It is easy to show that the last item implies that the matrix $K$ is
symmetric.  The first lemma we prove relates the positivity of the
smaller matrix $I + L^TKL \in \bbR^{p \times p}$ to the positivity of
the larger $n \times n$ matrix, $I + UKU^T$.

\begin{lemma}
\label{prop_2}
Let $LL^T$ denote a symmetric factorization of $U^TU$, where $L \in
\mathbb{R}^{p \times p}$. If the matrix $I+UKU^T$ is SPD
(semi-definite), then $I+L^TKL$ is also SPD (semi-definite).
\end{lemma}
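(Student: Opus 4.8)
The plan is to reduce the positive-definiteness of the small $p \times p$ matrix $I + L^T K L$ to that of the large $n \times n$ matrix $I + UKU^T$ by a change of variables together with a judicious lifting of $p$-vectors into $\mathbb{R}^n$. I would work entirely with the quadratic-form characterization of definiteness: a symmetric matrix is SPD (respectively positive semi-definite) precisely when its associated quadratic form is strictly positive (respectively nonnegative) on all nonzero vectors.

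First I would record that, since $U$ has full column rank $p$, the Gram matrix $U^TU \in \mathbb{R}^{p \times p}$ is itself SPD, so its symmetric factor $L$ is invertible. This guarantees that the map $y \mapsto z = Ly$ is a bijection of $\mathbb{R}^p$, and lets me rewrite, for nonzero $y$ with $z = Ly$,
\begin{align}
y^T (I + L^T K L) y = z^T (U^TU)^{-1} z + z^T K z = z^T\left( (U^TU)^{-1} + K\right) z,
\end{align}
using $y^Ty = z^T (LL^T)^{-1} z = z^T (U^TU)^{-1} z$. Thus $I + L^TKL$ is SPD if and only if the reduced matrix $(U^TU)^{-1} + K$ is SPD, and the same equivalence holds verbatim in the semi-definite case.

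The crux is then to show that the hypothesis on $I + UKU^T$ forces $(U^TU)^{-1} + K$ to be positive (semi-)definite. The key step is to lift each nonzero $z \in \mathbb{R}^p$ to the vector $v = U(U^TU)^{-1} z \in \mathbb{R}^n$, which is nonzero because $U$ has full column rank and which satisfies $U^T v = z$. Evaluating the large quadratic form on this witness gives
\begin{align}
v^T (I + UKU^T) v = v^Tv + (U^Tv)^T K (U^Tv) = z^T (U^TU)^{-1} z + z^T K z,
\end{align}
which is exactly $z^T\left((U^TU)^{-1} + K\right)z$. Since $I + UKU^T$ is SPD and $v \neq 0$, the left-hand side is strictly positive, so $(U^TU)^{-1} + K$ is SPD; by the equivalence above, $I + L^TKL$ is SPD. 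Replacing strict inequalities by non-strict ones throughout handles the semi-definite statement identically.

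I expect the only delicate point to be the correct choice of lift $v = U(U^TU)^{-1} z$ — the minimum-norm solution of $U^T v = z$, which lies in the column space of $U$ — since a naive lift would contribute extra terms to $v^Tv$ and break the clean identity. Once that substitution is in hand, the argument is a routine manipulation of quadratic forms, and the full-rank hypothesis on $U$ is precisely what makes both the bijection $y \mapsto Ly$ and the lift $z \mapsto v$ well-defined.
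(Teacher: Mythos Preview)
Your proof is correct and follows essentially the same approach as the paper: both arguments hinge on the identical lift $v = U(U^TU)^{-1}Ly$ (your $v$ with $z = Ly$ is exactly the paper's $x$) and verify that the large and small quadratic forms agree on this witness. Your additional detour through the intermediate matrix $(U^TU)^{-1} + K$ is a mild repackaging of the same computation rather than a genuinely different route.
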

\begin{proof}
To prove that $I+L^TKL$ is SPD, it suffices to prove that given any
non-zero $\by \in \mathbb{R}^p$, we have $\by^T(I+L^TKL)\by > 0$. Note
that since $U$ is full rank, the matrix $U^TU$ is invertible. We now
show that given any $\by \in \mathbb{R}^p$, there exists an $\bx \in
\mathbb{R}^n$ such that $\by^T (I + L^TKL) \by = \bx^T (I + UKU^T)
\bx$. This will enable us to conclude that $I + L^TKL$ is positive
definite since $I + UKU^T$ is positive-definite.  In fact, we will
directly construct $\bx \in \mathbb{R}^n$ such that $\by^T (I +
L^TKL) \by = \bx^T (I + UKU^T) \bx$.

Let us begin by choosing $\bx = U(U^TU)^{-1}L \by$. Then, the
following two criteria are met:
\begin{enumerate}
\item[(i)] $\Vert \bx \Vert_2 = \Vert \by \Vert_2$.
\item[(ii)] $\bx^T UKU^T\bx = \by^T L^T K L \by$.
\end{enumerate}
Expanding the norm of $\bx$ we have:
\begin{align}
\Vert \bx \Vert_2^2 &= \by^T L^T 
(U^TU)^{-T}\overbrace{U^TU(U^TU)^{-1}}^{I}L\by \\
&= \by^T L^T (U^TU)^{-T}L\by\\
& = \by^T L^T (LL^T)^{-T}L\by \\
&= \by^T L^T (L^{-T}L^{-1})L\by \\
&= \Vert \by \Vert_2^2.
\end{align}
This proves criteria (i).  Furthermore, by our choice of $\bx$, we
also have that $U^T\bx = L\by$. Therefore,
\begin{align}
\bx^TUKU^T\bx &= (U^T\bx)^T K (U^T\bx) \\
& = (L\by)^T K (L\by) \\
&= \by^TL^T K L\by.
\end{align}
This proves criteria (ii).
From the above, we can now conclude that
\begin{align}
\by^T (I + L^TKL) \by = \bx^T (I + UKU^T) \bx.
\end{align}
Hence, if $I+UKU^T$ is SPD, so is $I+L^TKL$. An identical calculation
proves the positive semi-definite case.
\end{proof}

We now state and prove a lemma required for solving a quadratic matrix
equation that arises in the subsequent factorization scheme.

\begin{lemma}
\label{prop_3}
A solution $X$ to the quadratic matrix equation
\begin{align}
\label{equation_Quadratic_Equation}
XLL^TX^T + X + X^T = K
\end{align}
with $L,K \in \mathbb{R}^{p \times p}$ 
and $L$ a full rank matrix is given by
\begin{align}
\label{equation_Quadratic_Solution}
X = L^{-T} (M-I) L^{-1},
\end{align}
where $M \in \mathbb{R}^{p \times p}$ is a symmetric factorization of
$I+L^T K L$, that is, $MM^T = I+L^T K L$.
\end{lemma}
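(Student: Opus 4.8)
The plan is to verify the claimed solution by direct substitution, since the lemma only asserts that the given $X$ is \emph{a} solution (existence), not uniqueness. The key preliminary observation is that multiplying the ansatz on the right by $L$ collapses one of the inverses: from $X = L^{-T}(M-I)L^{-1}$ we obtain $XL = L^{-T}(M-I)$, and transposing, $L^T X^T = (M^T - I)L^{-1}$. This is the maneuver that makes the whole computation tractable, because the quadratic term $XLL^TX^T = (XL)(L^TX^T)$ then has its inner $L^{-1}L$ cancel, leaving the outer $L^{-T}$ and $L^{-1}$ factors pulled to the extreme left and right.

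Carrying this out, I would compute
\begin{align}
XLL^TX^T = L^{-T}(M-I)(M^T-I)L^{-1}
= L^{-T}\bigl(MM^T - M - M^T + I\bigr)L^{-1},
\end{align}
and separately
\begin{align}
X + X^T = L^{-T}\bigl(M + M^T - 2I\bigr)L^{-1}.
\end{align}
Adding these, the cross terms $-M-M^T$ and $+M+M^T$ cancel, and the constants combine as $I - 2I = -I$, leaving $L^{-T}(MM^T - I)L^{-1}$. Substituting the defining relation $MM^T = I + L^T K L$ gives $MM^T - I = L^T K L$, whereupon the outer factors telescope via $L^{-T}L^T = I$ and $LL^{-1} = I$ to yield exactly $K$. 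This establishes equation~\eqref{equation_Quadratic_Equation}.

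The main obstacle is almost entirely bookkeeping rather than conceptual: one must track transposes carefully through the non-symmetric factor $M-I$ (note that $M$ itself need not be symmetric, so $(M-I)^T = M^T - I$ must be kept distinct), and group the products so that the $L^{-1}L$ and $LL^{-1}$ simplifications apply cleanly. I would also note at the outset that the hypotheses guarantee every object is well-defined: $L$ full rank makes $L^{-1}$ and $L^{-T}$ meaningful, and the existence of the symmetric factor $M$ of $I + L^T K L$ is precisely what Lemma~\ref{prop_2} secures, since $I + UKU^T$ SPD forces the $p \times p$ matrix $I + L^T K L$ to be SPD and hence to admit such a factorization. No positivity of $X$ or uniqueness needs to be argued for this lemma.
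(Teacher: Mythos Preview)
Your proposal is correct and follows essentially the same approach as the paper: direct substitution of $X = L^{-T}(M-I)L^{-1}$ into the quadratic equation, expanding $XLL^TX^T = L^{-T}(M-I)(M^T-I)L^{-1}$, and using $MM^T = I + L^TKL$ to collapse the result to $K$. The only cosmetic difference is that the paper isolates $XLL^TX^T$ and rewrites it as $-X - X^T + K$, whereas you compute $XLL^TX^T$ and $X+X^T$ separately and add; the algebra and the key cancellations are identical.
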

\begin{proof}
First note that from Lemma~\ref{prop_2}, since $I+L^T K L$ is
positive-definite, the symmetric factorization $I+L^T K L = MM^T$
exists. Now the easiest way to check if
Equation~\eqref{equation_Quadratic_Solution} satisfies
Equation~\eqref{equation_Quadratic_Equation} is to plug in the value
of $X$ from Equation~\eqref{equation_Quadratic_Solution} in
Equation~\eqref{equation_Quadratic_Equation}. This yields:
\begin{align}
XLL^TX^T & = L^{-T} (M-I)
 \overbrace{L^{-1}L}^{I}L^T(L^{-T} (M-I) L^{-1})^T\\
& = L^{-T} (M-I) \underbrace{L^T L^{-T}}_{I} (M-I)^T L^{-1}.
\end{align}
Further simplifying the expression, we have:
\begin{align}
XLL^TX^T& = L^{-T} (M-I) (M^T-I) L^{-1}\\
& = L^{-T} (MM^T - M - M^T + I) L^{-1}\\
& = L^{-T} (2I - M - M^T + L^TKL) L^{-1}\\
& = L^{-T}(I-M)L^{-1} + L^{-T}(I-M^T)L^{-1} + K\\
& = -X - X^T + K.
\end{align}
Therefore, we have that $XLL^TX^T + X+X^T=K$
\end{proof}


We are now ready to prove the main result, Theorem~\ref{thm_main}.

\begin{proof} (Proof of Theorem~\ref{thm_main} )
The proof follows immediately from the previous two lemmas.  With $X$
and $L$ as previously defined as in Equation~\eqref{equation_X}, we have
\begin{align}
(I + UXU^T)(I + UXU^T)^T &= I + U(X+X^T+XU^TUX^T)U^T \\
&= I + U(X+X^T+XLL^TX^T)U^T.
\end{align}
Since $X = L^{-T} (M-I) L^{-1}$ and $MM^T = I+L^TKL$, from
Lemma~\ref{prop_3} we have that $X+X^T+XLL^TX^T = K$. Substituting in
the previous equation, we get
\begin{align}
(I + UXU^T)(I + UXU^T)^T = I+UKU^T.
\end{align}
This proves the symmetric factorization.
\end{proof}

\begin{remark}
A slightly more numerically stable variant of factorization~\eqref{eq_main} is:
\begin{align}
\label{equation_Obtained_Stable_Factorization}
I + U K U^T = \left(I + Q \tilde X Q^T\right)
\left(I + Q \tilde X Q^T\right)^T,
\end{align}
where $Q$ is a unitary matrix such that $U=QR$.
\end{remark}

\begin{remark}
Even though the previous theorem only addresses the symmetric
factorization problem with no restrictions of the factors being symmetric (i.e., $W=W^T$), we can also easily obtain a {\em square-root factorization}
in a similar manner.  By this we mean that for a given symmetric
positive-definite matrix $A$, one can obtain a symmetric matrix $G$
such that $G^2 = A$. The key ingredient is obtaining a square-root
factorization of a low-rank update to the identity:
\begin{align}
I + UKU^T = (I+UXU^T)^2,
\end{align}
where $X$ is a symmetric matrix and satisfies
\begin{align}
K = X(UU^T) X + 2X.
\label{equation_squareroot}
\end{align}
The solution to Equation~\eqref{equation_squareroot} is given by
\begin{align}
X=L^{-1} (M-I) L^{-1},
\end{align}
where $L$ and $M$ are symmetric square-roots
of $U^TU$ and $I+LKL$:
\begin{align}
L^2 &= U^TU. \\
M^2 &= I+LKL.
\end{align}
These factorizations can easily be obtained via a singular value or
eigenvalue decomposition.  This can then be combined with the recursive
divide-and-conquer strategy discussed in the next section to yield an $\mathcal{O}(n \log^2 n)$ algorithm for computing square-roots of HODLR matrices.
\end{remark}

Theorem~\ref{thm_main} has the two following useful corollaries.

\begin{corollary}
\label{cor_1}
If $p=1$, i.e., the perturbation to the identity in Equation~\eqref{eq_main}
is of rank $1$, then 
\begin{align}
I + \bu\bu^T = (I+\alpha \bu\bu^T)(I+\alpha \bu\bu^T),
\end{align}
where $\alpha = \dfrac{\sqrt{1+\Vert \bu \Vert_2^2}-1}{\Vert \bu
  \Vert_2^2}$.
\end{corollary}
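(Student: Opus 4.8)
The plan is to specialize Theorem~\ref{thm_main} to the scalar case $p=1$, where all the matrix quantities collapse to numbers. When $p=1$, the matrix $U\in\bbR^{n\times 1}$ becomes the vector $\bu$, and $K\in\bbR^{1\times 1}$ becomes a scalar; the corollary states the case $K=1$, so that $I+UKU^T = I+\bu\bu^T$. First I would compute each ingredient of the theorem in this scalar setting: the quantity $U^TU$ is simply $\enorm{\bu}^2$, so its symmetric factor $L$ satisfies $L^2=\enorm{\bu}^2$, giving $L=\enorm{\bu}$. Next, $I+L^TKL = 1+\enorm{\bu}^2$, so its symmetric factor $M$ satisfies $M^2 = 1+\enorm{\bu}^2$, giving $M=\sqrt{1+\enorm{\bu}^2}$.

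Then I would substitute these into the formula~\eqref{equation_X} for $X$. Since everything is scalar, transposes and inverses are trivial, and we obtain
\begin{align}
X = L^{-T}(M-I)L^{-1} = \frac{M-1}{L^2} = \frac{\sqrt{1+\enorm{\bu}^2}-1}{\enorm{\bu}^2} = \alpha.
\end{align}
Substituting $X=\alpha$ (a scalar) into the factorization~\eqref{eq_main} of Theorem~\ref{thm_main}, the factor $I+UXU^T$ becomes $I+\alpha\bu\bu^T$, which is symmetric because $\alpha$ is a scalar and $\bu\bu^T$ is symmetric. Hence $(I+UXU^T)^T = I+\alpha\bu\bu^T$ as well, and the factorization reads $I+\bu\bu^T = (I+\alpha\bu\bu^T)(I+\alpha\bu\bu^T)$, exactly as claimed.

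Alternatively, one could bypass the theorem entirely and verify the identity directly, which serves as a useful sanity check: expanding $(I+\alpha\bu\bu^T)^2 = I + 2\alpha\,\bu\bu^T + \alpha^2\,\bu(\bu^T\bu)\bu^T = I + (2\alpha+\alpha^2\enorm{\bu}^2)\,\bu\bu^T$, one needs the scalar coefficient $2\alpha+\alpha^2\enorm{\bu}^2$ to equal $1$. This is precisely the scalar form of the quadratic equation~\eqref{equation_Quadratic_Equation} from Lemma~\ref{prop_3} with $L^2=\enorm{\bu}^2$ and $K=1$, and one checks that $\alpha = (\sqrt{1+\enorm{\bu}^2}-1)/\enorm{\bu}^2$ is its positive root. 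I anticipate essentially no obstacle here: the entire content is the routine collapse of matrix operations to scalar arithmetic, and the only mild care needed is to confirm that the chosen scalar square roots $L$ and $M$ are the positive ones so that $\alpha$ matches the stated expression (the negative roots would yield the other root of the quadratic). Since the result is a direct corollary of an already-proven theorem, the scalar substitution is the whole argument.
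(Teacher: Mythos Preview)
Your proposal is correct and follows exactly the approach implicit in the paper: the corollary is stated without proof as an immediate specialization of Theorem~\ref{thm_main}, and your scalar reduction of $L$, $M$, and $X$ is precisely that specialization. The direct expansion you include as a sanity check is also fine and confirms that $\alpha$ solves the scalar quadratic $2\alpha+\alpha^2\enorm{\bu}^2=1$.
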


This result can also be found
in~\cite{gill1974methods}. Corollary~\ref{cor_2} extends low-rank
updates to SPD matrices {\em other} than the identity.

\begin{corollary}
\label{cor_2}
Given a symmetric factorization of the form $WW^T$, where the inverse
of $W$ can be applied fast (i.e., the linear system $W \bx = \by$ can
be solved fast), then a symmetric factorization of a SPD matrix of the
form $WW^T + UKU^T$, where $U \in \bbR^{n\times p}$ and $p \ll n$, can
also be obtained fast. For instance, if the linear system $W \bx= \by$
can be solved at a computational cost of $\mathcal{O}(n)$, then the
symmetric factorization $W(I+\tilde{U}X \tilde{U}^T)(I+\tilde{U}X
\tilde{U}^T)^T W^T$ can also be obtained at a computational cost of
$\mathcal{O}(n)$.
\end{corollary}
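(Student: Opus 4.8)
The plan is to reduce the problem to the hypotheses of Theorem~\ref{thm_main} by symmetrically factoring the known operator $W$ out of the sum. Because $WW^T$ is SPD, the matrix $W$ is invertible, so I would write
\begin{align}
WW^T + UKU^T = W\left(I + \tilde U K \tilde U^T\right)W^T,
\qquad \tilde U := W^{-1}U .
\end{align}
This exhibits the perturbed matrix as a congruence by $W$ of a rank-$p$ update to the identity, which is precisely the object that Theorem~\ref{thm_main} knows how to symmetrically factor.

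Next I would check that the inner factor $I + \tilde U K \tilde U^T$ satisfies the hypotheses of Theorem~\ref{thm_main}. Since $U$ has rank $p$ and $W^{-1}$ is nonsingular, $\tilde U = W^{-1}U$ again has rank $p$. For positive-definiteness, I would use that congruence preserves it: for any nonzero $\bx \in \bbR^n$, putting $\boldsymbol z := W^{-T}\bx \neq 0$,
\begin{align}
\bx^T\left(I + \tilde U K \tilde U^T\right)\bx
= \bx^T W^{-1}\left(WW^T + UKU^T\right)W^{-T}\bx
= \boldsymbol z^T\left(WW^T + UKU^T\right)\boldsymbol z > 0,
\end{align}
so the inner factor is SPD. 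Theorem~\ref{thm_main} then yields $I + \tilde U K \tilde U^T = (I + \tilde U X \tilde U^T)(I + \tilde U X \tilde U^T)^T$ with $X = L^{-T}(M-I)L^{-1}$, where $LL^T = \tilde U^T \tilde U$ and $MM^T = I + L^T K L$. Substituting this back into the congruence gives
\begin{align}
WW^T + UKU^T = W\left(I + \tilde U X \tilde U^T\right)\left(I + \tilde U X \tilde U^T\right)^T W^T,
\end{align}
which is the claimed factorization, with symmetric factor $W(I + \tilde U X \tilde U^T)$.

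For the complexity claim, I would track each step. Forming $\tilde U = W^{-1}U$ requires $p$ solves of $W\bx = \by$, costing $\mathcal{O}(pn)$ if each solve is $\mathcal{O}(n)$; forming the Gram matrix $\tilde U^T\tilde U$ costs $\mathcal{O}(p^2 n)$; and the remaining factorizations producing $L$, $M$, and $X$ act only on $p\times p$ matrices, costing $\mathcal{O}(p^3)$. With $p$ treated as a small constant, the total is $\mathcal{O}(n)$. The one step needing genuine care is verifying that the inner update is SPD so that Theorem~\ref{thm_main} is applicable; everything else is routine bookkeeping, with the dominant work being the $p$ applications of $W^{-1}$.
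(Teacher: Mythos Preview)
Your proposal is correct and follows essentially the same approach as the paper: factor out $W$ to write $WW^T + UKU^T = W(I + \tilde U K \tilde U^T)W^T$ with $\tilde U = W^{-1}U$, then invoke Theorem~\ref{thm_main} on the inner rank-$p$ update. The paper does not spell out a formal proof of Corollary~\ref{cor_2} but illustrates exactly this reduction in Example~\ref{example_2}; your write-up is in fact more careful, since you explicitly verify via the congruence argument that the inner factor is SPD before appealing to Theorem~\ref{thm_main}, and you itemize the $\mathcal{O}(pn)$, $\mathcal{O}(p^2n)$, and $\mathcal{O}(p^3)$ costs.
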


A numerical example demonstrating Corollary~\ref{cor_2} is contained
in Section~\ref{sec-fast}.

Note that the factorizations in Equations~\eqref{eq_main}
and~\eqref{equation_Obtained_Stable_Factorization} are similar to the
Sherman-Morrison-Woodbury formula; in each case, the symmetric factor
is a low-rank perturbation to the identity. Furthermore, the row-space
and column-space of the perturbed matrix are the same as the row-space
and column-space of the symmetric factors. Another advantage of the
factorization in Equations~\eqref{eq_main}
and~\eqref{equation_Obtained_Stable_Factorization} is that the storage
cost and the computational cost of applying the factor to a vector,
which is of significant interest as indicated in the introduction,
scales as $\mathcal{O}(pn)$.

We now describe a computational algorithm for finding the symmetric
factorization described in Theorem~\ref{thm_main}.
Algorithm~\ref{algorithm_main} lists the individual steps in computing
the symmetric factorization and their associated computational
cost. The only computational cost is in computing the matrix $X$
in Equation~\eqref{eq_main}. Note that the dominant cost is
the matrix-matrix product of an $p \times n$ matrix with a $n \times
p$ matrix.  The rest of the steps are performed on a lower
$p$-dimensional space.

\begin{algorithm}[!htbp]
\begin{center}
\caption{Symmetric factorization of $I+UKU^T$.}
\label{algorithm_main}
\begin{tabular}[!htbp]{|c|l|c|}
\hline
Step & Computation & Cost\\
\hline
$1$ & Calculate $A = U^TU \in \mathbb{R}^{p \times p}$ & $\mathcal{O}(p^2n)$\\
\hline
$2$ & Factor $A$ as $LL^T$, where $L \in \mathbb{R}^{p \times p}$ & $\mathcal{O}(p^3)$\\
\hline
$3$ & Calculate $T = I+L^TKL \in \mathbb{R}^{p \times p}$ & $\mathcal{O}(p^3)$\\
\hline
$4$ & Factorize $T$ as $MM^T$, where $M \in \mathbb{R}^{p \times p}$ & $\mathcal{O}(p^3)$\\
\hline
$5$ & Calculate $X = L^{-T}(M-I_p)L^{-1} \in \mathbb{R}^{p \times p}$ & $\mathcal{O}(p^3)$\\
\hline
\end{tabular}
\end{center}
\end{algorithm}

\FloatBarrier

Algorithm~\ref{algorithm_main} can be made more stable by first
performing a $QR$ decomposition of the matrix $U$ as indicated in
Equation~\eqref{equation_Obtained_Stable_Factorization}. Here, the
dominant cost is in obtaining the $QR$ factorization of the matrix
$U$.
This is described in Algorithm~\ref{algorithm_stable}.
\begin{algorithm}[!htbp]
\begin{center}
\caption{Stable version of symmetric factorization of $I+UKU^T$.}
\label{algorithm_stable}
\begin{tabular}[!htbp]{|c|l|c|}
\hline
Step & Computation & Cost\\
\hline
$1$ & $QR$ decompose $U=QR$, where $Q \in \mathbb{R}^{n \times p}$ and
 $R \in \mathbb{R}^{p \times p}$ & $\mathcal{O}(p^2n)$\\
\hline
$2$ & Calculate $T = I+RKR^T \in \mathbb{R}^{p \times p}$ & $\mathcal{O}(p^3)$\\
\hline
$3$ & Factorize $T$ as $MM^T$, where $M \in \mathbb{R}^{p \times p}$ & $\mathcal{O}(p^3)$\\
\hline
$4$ & Calculate $X = M-I \in \mathbb{R}^{p \times p}$ & $\mathcal{O}(p)$\\
\hline
\end{tabular}
\end{center}
\end{algorithm}

\begin{remark}
If $U$ is rank deficient, then the 
reduced-rank $QR$ decomposition must be computed. Algorithm 2
proceeds accordingly with a smaller internal rank $\tilde p$.
\end{remark}

\begin{remark}
Note that if $U$ is unitary, then Equation~\eqref{equation_X} reduces to $X = M-I$ and Equation~\eqref{equation_MMT} reduces to $MM^T = I + K$. Note that in our case $K$ is of the form
$$K = \begin{bmatrix}
0 & \tilde{K}^T\\
\tilde{K} & 0
\end{bmatrix}$$
Hence, the Cholesky decomposition of $I+K$ is of the form
$$I +K = \begin{bmatrix}
I & 0\\
\tilde{K} & D
\end{bmatrix} \begin{bmatrix}
I & \tilde{K}^T\\
0 & D^T
\end{bmatrix}$$
where we need $DD^T = I-\tilde{K}\tilde{K}^T$. Hence, to find the Cholesky decomposition of $I+K$, it suffices to find the Cholesky decomposition of $I-\tilde{K}\tilde{K}^T$. Note that matrix $I-\tilde{K}\tilde{K}^T$ is half the size of the matrix $I+K$.
\end{remark}

\begin{remark}
	From the above, we also note that if $U$ is unitary, we then have
	\begin{align}
		\det(M) & = \det(D)
		\label{equation_D} 
	\end{align}
	This inturn gives us that
	\begin{align}
		\det(I+UKU^T) & = \det(I+UXU^T)^2 = \det(I+X)^2 = \det(M)^2 = \det(D)^2
	\end{align}

\end{remark}

\section{Symmetric Factorization of HODLR matrices}
\label{section_hierarchical}
In this section, we extend the symmetric factorization formula of the
previous section to a class of hierarchically structured matrices,
known as Hierarchical Off-Diagonal Low-Rank matrices.

\subsection{Hierarchical Off-Diagonal Low Rank matrices}
There exists a variety of hierarchically structured matrices depending
on the particular matrix, choice of recursive sub-division (inherent
tree structure), and low-rank sub-block compression technique (refer
to Chapter~3 of~\cite{ambikasaran2013thesis} for more details). In
this article, we will focus on a specific class of matrices known as
Hierarchical Off-Diagonal Low-Rank (HODLR) matrices. Furthermore,
since this article deals with SPD matrices, we shall restrict our
attention to SPD-HODLR matrices. We first briefly review
the HODLR matrix structure.

A matrix $A \in \mathbb{R}^{n \times n}$ is termed a $2$-level HODLR
matrix if it can be written in the form:
\begin{align}
A & =
\begin{bmatrix}
A_1^{(1)} & U_{1}^{(1)} K_{1,2}^{(1)}V_{2}^{(1)^T}\\
U_{2}^{(1)} K_{2,1}^{(1)} V_{1}^{(1)^T} & A_2^{(1)}
\end{bmatrix}\\
& =
\begin{bmatrix}
\begin{bmatrix}
A_{1}^{(2)} & U_{1}^{(2)} K_{1,2}^{(2)} V_{2}^{(2)^T}\\
U_{2}^{(2)} K_{2,1}^{(2)} V_{1}^{(2)^T} & A_{2}^{(2)}
\end{bmatrix}
&
U_{1}^{(1)} K_{1,2}^{(1)} V_{2}^{(1)^T}\\
U_{2}^{(1)} K_{2,1}^{(1)} V_{1}^{(1)^T}&
\begin{bmatrix}
A_{3}^{(2)} & U_{3}^{(2)} K_{3,4}^{(2)} V_{4}^{(2)^T}\\
U_{4}^{(2)} K_{4,3}^{(2)} V_{3}^{(2)^T} & A_{4}^{(2)}
\end{bmatrix}
\end{bmatrix},
\label{equation_HODLR_2_level}
\end{align}
where the off-diagonal blocks are of low-rank. Throughout this
section, for pedagogical purposes, we shall assume that the rank of
these off-diagonal blocks are the same at all levels. However, in practice
the ranks of different blocks on different levels is determined numerically,
and are frequently non-constant.

In general, for a $\kappa$-level HODLR matrix $A$, the $i^{th}$
diagonal block at level $k$, where $1 \leq i \leq 2^{k}$ and $0 \leq k
< \kappa$, denoted as $A_{i}^{(k)}$, can be written as
\begin{align}
A_{i}^{(k)} =
\begin{bmatrix}
A_{2i-1}^{(k+1)} & U_{2i-1}^{(k+1)} K_{2i-1,2i}^{(k+1)}V_{2i}^{(k+1)^T}\\
U_{2i}^{(k+1)} K_{2i, 2i-1}^{(k+1)} V_{2i-1}^{(k+1)^T} & A_{2i}^{(k+1)}
\end{bmatrix}
\label{equation_HODLR_level_k}
\end{align}
where 
\begin{align}
A_{i}^{(k)} &\in \mathbb{R}^{n/2^k \times n /2^k},\\
U_{2i-1}^{(k)}, U_{2i}^{(k)}, V_{2i-1}^{(k)}, V_{2i}^{(k)} &\in
\mathbb{R}^{n/2^k \times p},\\
K_{2i-1,2i}^{(k)}, K_{2i,2i-1}^{(k)} &\in \mathbb{R}^{p \times p},
\end{align}
and $p \ll n$. The maximum number of levels, $\kappa$, is $\big
\lfloor \log_2 ( n/2p ) \big \rfloor$.  Figure~\ref{figure_HODLR}
depicts the typical structure of a HODLR matrix at different levels.
See~\cite{ambikasaran2013fast} for details on the actual numerical
decomposition of a matrix into HODLR form.

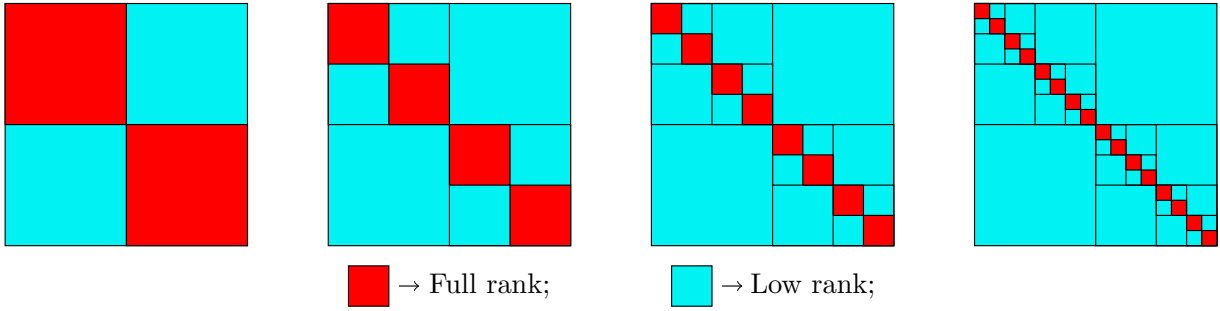
\begin{figure}[!tbp]
\resizebox{\hsize}{!}{
\begin{tikzpicture}[scale=1]
\draw[fill=white,draw = white] (-0.0625,-0.0625) rectangle (15.0625,3.0625);

\draw[draw=black, fill=mycyan] (0,0) rectangle (3,3);
\foreach \i in {0, 1.5} {
	\draw[draw=black, fill=red] (\i, 3-\i) rectangle (\i+1.5, 1.5-\i);
}

\draw[draw=black, fill=mycyan] (4,0) rectangle (7,3);
\foreach \i in {0, 1.5} {
	\draw[draw=black] (4+\i, 3-\i) rectangle (\i+5.5, 1.5-\i);
}
\foreach \i in {0, 0.75, 1.5, 2.25} {
	\draw[draw=black, fill=red] (4+\i, 3-\i) rectangle (\i+4.75, 2.25-\i);
}

\draw[draw=black, fill=mycyan] (8,0) rectangle (11,3);
\foreach \i in {0, 1.5} {
	\draw[draw=black] (8+\i, 3-\i) rectangle (\i+9.5, 1.5-\i);
}
\foreach \i in {0, 0.75, 1.5, 2.25} {
	\draw[draw=black] (8+\i, 3-\i) rectangle (\i+8.75, 2.25-\i);
}
\foreach \i in {0, 0.375, 0.75, 1.125, 1.5, 1.875, 2.25, 2.625} {
	\draw[draw=black, fill=red] (8+\i, 3-\i) rectangle (\i+8.375, 2.625-\i);
}

\draw[draw=black, fill=mycyan] (12,0) rectangle (15,3);
\foreach \i in {0, 1.5} {
	\draw[draw=black] (12+\i, 3-\i) rectangle (\i+13.5, 1.5-\i);
}
\foreach \i in {0, 0.75, 1.5, 2.25} {
	\draw[draw=black] (12+\i, 3-\i) rectangle (\i+12.75, 2.25-\i);
}
\foreach \i in {0, 0.375, 0.75, 1.125, 1.5, 1.875, 2.25, 2.625} {
	\draw[draw=black] (12+\i, 3-\i) rectangle (\i+12.375, 2.625-\i);
}

\foreach \i in {0, 0.1875, 0.375, 0.5625, 0.75, 0.9375, 1.125, 1.3125, 1.5, 1.6875, 1.875, 2.0625, 2.25, 2.4375, 2.625, 2.8125} {
	\draw[draw=black, fill=red] (12+\i, 3-\i) rectangle (\i+12.1875, 2.8125-\i);
}

\def\x{4.25}
\draw[fill = red] (\x,-0.75) rectangle (\x+0.5,-0.25);
\draw[->] (\x+0.625,-0.5) -- (\x+0.875, -0.5);
\node at (\x+1.75,-0.5) {Full rank;};

\draw[fill = mycyan] (\x+4,-0.75) rectangle (\x+4.5,-0.25);
\draw[->] (\x+4.625,-0.5) -- (\x+4.875, -0.5);
\node at (\x+5.75,-0.5) {Low rank;};

\end{tikzpicture}
}
\caption{A HODLR matrix at different levels.}
\label{figure_HODLR}
\end{figure}


\subsection{The symmetric factorization of a SPD-HODLR matrix}

Given a SPD-HODLR matrix $A\in \mathbb{R}^{n \times n}$, we wish to
obtain a symmetric factorization into $2\kappa+2$ block diagonal
matrices, that is we will factor $A$ as:
\begin{align}
A = \underbrace{A_{\kappa} A_{\kappa-1} A_{\kappa - 2} \cdots A_0}_W \overbrace{A_0^T \cdots A_{\kappa - 2}^T A_{\kappa-1}^T A_{\kappa}^T}^{W^T},
\label{equation_main_factorization}
\end{align}
where $A_k \in \mathbb{R}^{n \times n}$ is a block diagonal matrix
with $2^k$ diagonal blocks each of size $\dfrac{n}{2^k} \times
\dfrac{n}{2^k}$. The important feature of this factorization is that
each of the diagonal blocks on all levels is a low-rank update to
an identity matrix. Also, since $A$ is symmetric, 
we will assume that $U_{i}^{(k)} = V_{i}^{(k)}$ and
$K_{i,j}^{(k)} = K_{j,i}^{(k)^T}$ in Equation~\eqref{equation_HODLR_level_k}
for the remainder of the article. A graphical description of the $W$ factor
for a three-level SPD-HODLR
matrix is shown in Figure~\ref{figure_W_factor}.

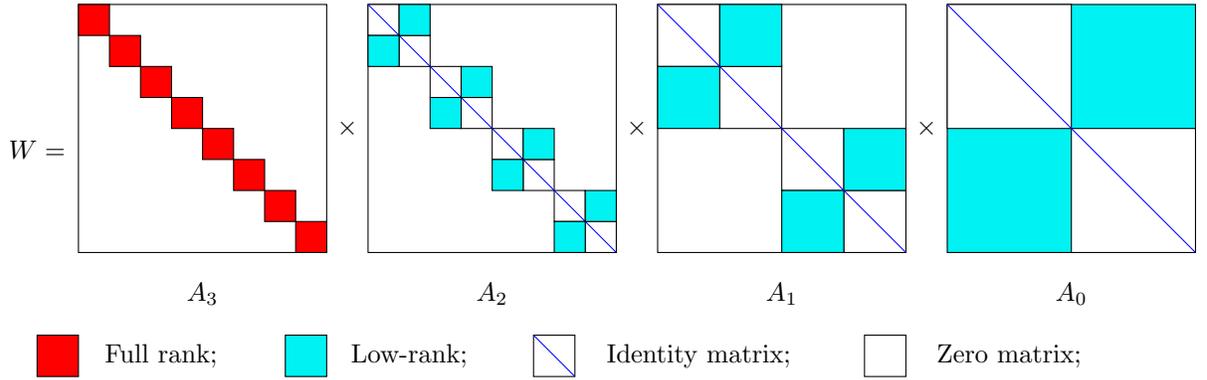
\begin{figure}[!tbp]
\begin{center}
\begin{tikzpicture}[scale=1.1]
\node at (3,1.25) {$W = $};

\draw[fill=white] (3.5,0) rectangle (6.5,3);
\draw[fill=red] (3.5,2.625) rectangle (3.875,3);
\draw[fill=red] (3.875,2.25) rectangle (4.25,2.625);
\draw[fill=red] (4.25,1.875) rectangle (4.625,2.25);
\draw[fill=red] (4.625,1.5) rectangle (5,1.875);
\draw[fill=red] (5,1.125) rectangle (5.375,1.5);
\draw[fill=red] (5.375,0.75) rectangle (5.75,1.125);
\draw[fill=red] (5.75,0.375) rectangle (6.125,0.75);
\draw[fill=red] (6.125,0) rectangle (6.5,0.375);

\node at (5,-0.5) {$A_3$};

\node at (6.75,1.5) {$\times$};

\draw[fill=white] (7,0) rectangle (10,3);
\draw[fill=white] (7,2.625) rectangle (7.375,3);
\draw[fill=white] (7.375,2.25) rectangle (7.75,2.625);
\draw[fill=white] (7.75,1.875) rectangle (8.125,2.25);
\draw[fill=white] (8.125,1.5) rectangle (8.5,1.875);
\draw[fill=white] (8.5,1.125) rectangle (8.875,1.5);
\draw[fill=white] (8.875,0.75) rectangle (9.25,1.125);
\draw[fill=white] (9.25,0.375) rectangle (9.625,0.75);
\draw[fill=white] (9.625,0) rectangle (10,0.375);

\draw[blue] (7,3) -- (10,0);

\draw[fill=mycyan] (7,2.25) rectangle (7.375,2.625);
\draw[fill=mycyan] (7.375,2.625) rectangle (7.75,3);
\draw[fill=mycyan] (7.75,1.5) rectangle (8.125,1.875);
\draw[fill=mycyan] (8.125,1.875) rectangle (8.5,2.25);
\draw[fill=mycyan] (8.5,0.75) rectangle (8.875,1.125);
\draw[fill=mycyan] (8.875,1.125) rectangle (9.25,1.5);
\draw[fill=mycyan] (9.25,0) rectangle (9.625,0.375);
\draw[fill=mycyan] (9.625,0.375) rectangle (10,0.75);

\node at (8.5,-0.5) {$A_2$};

\node at (10.25,1.5) {$\times$};

\draw[fill=white] (10.5,0) rectangle (13.5,3);
\draw[fill=white] (10.5,2.25) rectangle (11.25,3);
\draw[fill=white] (11.25,1.5) rectangle (12.0,2.25);
\draw[fill=white] (12,0.75) rectangle (12.75,1.5);
\draw[fill=white] (12.75,0) rectangle (13.5,0.75);
\draw[fill=mycyan] (10.5,1.5) rectangle (11.25,2.25);
\draw[fill=mycyan] (11.25,2.25) rectangle (12.0,3);
\draw[fill=mycyan] (12,0) rectangle (12.75,0.75);
\draw[fill=mycyan] (12.75,0.75) rectangle (13.5,1.5);

\draw[blue] (10.5,3) -- (13.5,0);

\node at (12,-0.5) {$A_1$};

\node at (13.75,1.5) {$\times$};

\draw[fill=mycyan] (14,0) rectangle (17,3);
\draw[fill=white] (14,1.5) rectangle (15.5,3);
\draw[fill=white] (15.5,0) rectangle (17,1.5);

\draw[blue] (14,3) -- (17,0);

\node at (15.5,-0.5) {$A_0$};

\draw[fill = red] (-1+4,-1.5) rectangle (-0.5+4,-1);
\node at (0.5+4,-1.25) {Full rank;};

\draw[fill = mycyan] (2+4,-1.5) rectangle (2.5+4,-1);
\node at (3.5+4,-1.25) {Low-rank;};

\draw[fill = white] (5+4,-1.5) rectangle (5.5+4,-1);
\draw[blue] (5+4,-1) -- (5.5+4,-1.5);

\node at (7+4,-1.25) {Identity matrix;};

\draw[fill = white] (9+4,-1.5) rectangle (9.5+4,-1);
\node at (10.75+4,-1.25) {Zero matrix;};

\end{tikzpicture}
\caption{Pictorial factorization of the $W$ factor in Equation~\eqref{equation_main_factorization} when $\kappa = 3$.}
\end{center}
\label{figure_W_factor}
\end{figure}
\FloatBarrier

\subsection{The algorithm}
We now, in detail, describe an algorithm for computing a symmetric
factorization of a SPD-HODLR matrix.

\begin{itemize}
\item[\textsc{Step 1}:] The first step in the algorithm is to factor
  out the block diagonal matrix on the leaf level, i.e., obtain any
  symmetric factorization of the form:
  \begin{align}
    \begin{bmatrix}
      A_{11}^{(\kappa)} & 0 & 0 & \cdots & 0\\
      0 & A_{22}^{(\kappa)} & 0 & \cdots & 0\\
      0 & 0 & A_{33}^{(\kappa)} & \cdots & 0\\
      \vdots & \vdots & \vdots & \ddots & 0\\
      0 & 0 & 0 & \cdots & A_{2^{\kappa},2^{\kappa}}^{(\kappa)}\\
    \end{bmatrix}
    = A_{\kappa} A_{\kappa}^T
    \label{equation_symmetric_factorization}
  \end{align}
  where
\begin{align}
  A_{\kappa} = \begin{bmatrix}
    W_{11}^{(\kappa)} & 0 & 0 & \cdots & 0\\
    0 & W_{22}^{(\kappa)} & 0 & \cdots & 0\\
    0 & 0 & W_{33}^{(\kappa)} & \cdots & 0\\
    \vdots & \vdots & \vdots & \ddots & 0\\
0 & 0 & 0 & \cdots & W_{2^{\kappa},2^{\kappa}}^{(\kappa)}\\
\end{bmatrix}
\end{align}
and $A_{ii}^{(\kappa)} = W_{ii}^{(\kappa)} W_{ii}^{(\kappa)^T}$. The
computational cost of this step is $\mathcal{O}(n)$.

\begin{remark}
Throughout this algorithm, it is worth recalling the fact that any diagonal sub-block of a SPD matrix is also SPD.
\end{remark}

\item[\textsc{Step 2}:] The next step is to factor out $A_{\kappa}$
  and $A_{\kappa}^T$ from the left and right respectively, i.e., write
$$A = A_{\kappa}\tilde{A}_{\kappa} A_{\kappa}^T.$$ Note that when
  factoring out $A_{\kappa}$ we only need to apply the inverse of
  $W_{ii}^{(\kappa)}$ to each of the $U_i^{(k)}$ at all levels
  $k$. Hence, we need to apply the inverse of $W_{ii}^{(\kappa)}$ to
  $p \kappa$ column vectors. Since the size of $W_{ii}^{(\kappa)}$ is
  $\mathcal{O}(p)$ and there are $2^{\kappa}$ such matrices, the cost
  of this step is $\mathcal{O}\left(\kappa n\right)$.

\begin{remark}
Since the matrix is symmetric, no additional work needs to be done for
factoring out $A_{\kappa}^T$ on the right.
\end{remark}

\item[\textsc{Step 3}:]
Now note that the matrix $\tilde{A}_{\kappa}$ can be written as
\begin{align}
\tilde{A}_{\kappa} =
\begin{bmatrix}
A_{11}^{(\kappa-1,1)}
& U_1^{(\kappa-1,1)} K_{1,2}^{(\kappa-1)} U_2^{(\kappa-1,1)^T}
& \cdots & \cdots\\
U_2^{(\kappa-1,1)} K_{2,1}^{(\kappa-1)} U_1^{(\kappa-1,1)^T}
&
A_{22}^{(\kappa-1,1)}
& \cdots & \cdots\\
\vdots & \vdots & \ddots & \cdots\\
\vdots & \vdots & \vdots &
A_{2^{\kappa-1},2^{\kappa-1}}^{(\kappa-1,1)}
\end{bmatrix}
\end{align}
where
\begin{align}
A_{ii}^{(\kappa-1,1)} = \begin{bmatrix}
I & U_{2i-1}^{(\kappa,1)} K_{2i-1, 2i}^{(\kappa)} U_{2i}^{(\kappa,1)^T}\\
U_{2i}^{(\kappa,1)} K_{2i,2i-1}^{(\kappa)} U_{2i-1}^{(\kappa,1)^T} & I
\end{bmatrix}
\end{align}
and $U_j^{(k,1)}$ indicates that $U_j^{(k)}$ has been updated when
factoring out $A_{\kappa}$. Using Theorem~\ref{thm_main}, we can now
obtain the symmetric factorization of the diagonal blocks:
\begin{align}
\begin{bmatrix}
A_{11}^{(\kappa-1,1)} & 0 & 0 & \cdots & 0\\
0 & A_{22}^{(\kappa-1,1)} & 0 & \cdots & 0\\
0 & 0 & A_{33}^{(\kappa-1,1)} & \cdots & 0\\
\vdots & \vdots & \vdots & \ddots & 0\\
0 & 0 & 0 & \cdots & A_{2^{\kappa-1},2^{\kappa-1}}^{(\kappa-1,1)}\\
\end{bmatrix}
= A_{\kappa-1} A_{\kappa-1}^T
\label{equation_symmetric_factorization_1}
\end{align}
where
\begin{align}
A_{\kappa-1} = 
\begin{bmatrix}
W_{11}^{(\kappa-1)} & 0 & 0 & \cdots & 0\\
0 & W_{22}^{(\kappa-1)} & 0 & \cdots & 0\\
0 & 0 & W_{33}^{(\kappa-1)} & \cdots & 0\\
\vdots & \vdots & \vdots & \ddots & 0\\
0 & 0 & 0 & \cdots & W_{2^{\kappa-1},2^{\kappa-1}}^{(\kappa-1)}\\
\end{bmatrix}
\end{align}
 and 
\begin{align}
W_{ii}^{(\kappa-1)} W_{ii}^{(\kappa-1)^T} = A_{ii}^{(\kappa-1,1)}
\end{align}
with
\begin{align}
A_{ii}^{(\kappa-1,1)} = I + \begin{bmatrix}U_{2i-1}^{(\kappa-1,1)} & 0\\ 0 & U_{2i}^{(\kappa-1,1)}\end{bmatrix} \begin{bmatrix}0 & K_{2i-1,2i}^{(\kappa-1)}\\ K_{2i,2i-1}^{(\kappa-1)} & 0\end{bmatrix} \begin{bmatrix}U_{2i-1}^{(\kappa-1,1)} & 0\\ 0 & U_{2i}^{(\kappa-1,1)}\end{bmatrix}^T.
\end{align}
Hence, the cost of obtaining $A_{\kappa-1}$ is $\mathcal{O}(n)$.

\item[\textsc{Step 4}:] As before, the next step is to factor out
  $A_{\kappa-1}$ and $A_{\kappa-1}^T$ from the left and right sides,
  respectively, i.e., write 
\begin{align}
\tilde{A}_{\kappa} = A_{\kappa-1}
  \tilde{A}_{\kappa-1} A_{\kappa-1}^T
\end{align}
Indeed, we now have:
\begin{align}
A = A_{\kappa} A_{\kappa-1} \tilde{A}_{\kappa-1} A_{\kappa-1}^T A_{\kappa}^T
\end{align}
From Theorem~\ref{thm_main}, since $W_{ii}^{(\kappa-1)}$ is a low-rank
perturbation to identity, the computational cost of applying the
inverse of $W_{ii}^{(\kappa-1)}$ (using the Sherman-Morrison-Woodbury
formula) to all of the $U_i^{(k,1)}$, where $k$ ranges from $1$ to
$\kappa-1$, is $\mathcal{O}((\kappa-1) n/2^{\kappa-1} )$. Since
there are $2^{\kappa-1}$ such matrices $W_{ii}^{(\kappa-1)}$, the net cost of
factoring out $A_{\kappa-1}$ is $\mathcal{O}((\kappa-1)n)$.

\item[\textsc{Step 5}:] Steps 2 through 4 are repeated until we reach
  level $0$, yielding a total computational cost of
\begin{align}
\sum_{k=0}^\kappa
  \mathcal{O}((k+1)n) = \mathcal{O}(\kappa^2 n) =
    \mathcal{O}(n \log^2 n),
\end{align}
where we have used the fact that $\kappa = \mathcal{O}(\log n)$.
\end{itemize}

\begin{remark}
If a nested low-rank structure exists for the off-diagonal blocks,
i.e., if the HODLR matrix can in fact be represented as a
Hierarchically Semi-Separable matrix, then the computational cost of
obtaining the factorization is $\mathcal{O}(n)$.
\end{remark}

\section{Numerical benchmarks}
\label{section_numerical}
We first present some numerical benchmarks for the symmetric
factorization of a low-rank update to a banded matrix, and then demonstrate
the performance of our symmetric factorization scheme on HODLR matrices. These numerical benchmarks can be reproduced by the implementation, which has been made available public at \url{https://gitlab.com/SAFRAN/HODLR}. All the numerical results illustrated in this article were carried out using a single core of a 2.8 GHz Intel Core i7 processor. The implementation made available at \url{https://gitlab.com/SAFRAN/HODLR} has also been parallelized using OpenMP, though to illustrate the scaling, we have not enabled the parallelization.

\subsection{Fast symmetric factorization of low-rank updates to banded matrices}
\label{sec-fast}
To highlight the computational speedup gained by this new factorization,
we compare the time taken for our algorithm with the time taken for 
Cholesky-based factorizations for low-rank updates~\cite{stewart1998matrix}.

\subsubsection{Example 1}
\label{example_1}
Consider a set of $n$ points randomly distributed in the interval
$[-1,1]$. Let the $i,j$ entry of the matrix $A$ be given as $A(i,j) =
\sigma^2 \delta_{ij} + x_i x_j$, where $x_i$ denotes the location of
the $i^{th}$ point. Note that the matrix $A$ is a rank $1$ update to a
scaled identity matrix and can be written as $A = \sigma^2 I + \bx
\bx^T$, where $\bx = \begin{bmatrix}x_1 & \cdots &
  x_n \end{bmatrix}^T$. It is easy to check that the matrix $A$ is
SPD. In fact, in the context of Gaussian processes, this is the
covariance matrix that arises when using a linear covariance function
given by $K(x_i,x_j) = x_i x_j$. Taking $\sigma=1$, we have $A = I +
\bx \bx^T$.
\begin{figure}[!tbp]
\begin{center}
\begin{tikzpicture}[scale=0.95]
\begin{loglogaxis}[
xmin	=	1000,
xmax=	1000000000,
ymin =	0.01,
ymax=	1,
xlabel=	System size,
ylabel=	Time taken in seconds,
legend style={
at={(0.325,0.8)},
anchor=south west}
]
\addplot coordinates {
(8000000, 0.01)
(16000000, 0.02)
(32000000, 0.04)
(64000000, 0.09)
(128000000, 0.18)
(256000000, 0.37)
};
\addplot coordinates {
(2000, 0.02)
(4000, 0.08)
(8000, 0.36)
};
\addplot coordinates {
(1000000, 0.01)
(2000000, 0.02)
(4000000, 0.04)
(8000000, 0.08)
(16000000, 0.16)
(32000000, 0.32)
};
\addplot coordinates {
(10000, 0.01)
(20000, 0.04)
(40000, 0.16)
(80000, 0.64)
};
\legend{Fast symmetric factorization, Cholesky update, $\mathcal{O}(n)$ scale, $\mathcal{O}(n^2)$ scale}
\end{loglogaxis}
\end{tikzpicture}
\caption{Time taken (in seconds) versus system size for the
  fast algorithm of this paper and the Cholesky update.}
\label{figure_1}
\end{center}
\end{figure}
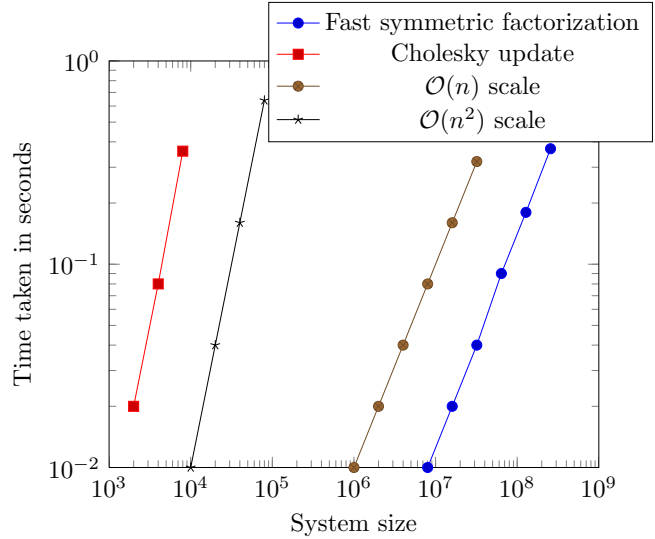


From Corollary~\ref{cor_1}, the only quantity we need to compute is
$\alpha = \left( \sqrt{1+\Vert \bx \Vert_2^2}-1 \right) / \Vert \bx
\Vert_2^2$. Figure~\ref{figure_1} compares the time taken to obtain
the fast symmetric factorization versus the Cholesky
factorization of $A$.


\subsubsection{Example 2}
\label{example_2}
Next, we consider a rank $r$ update to a SPD tridiagonal matrix:
\begin{align}
\label{equation_tridiagonal_update}
A = T + UU^T
\end{align}
where $U \in \mathbb{R}^{n \times r}$ and $T \in \mathbb{R}^{n \times
  n}$ is a tridiagonal matrix with entries given as:
\begin{align}
\label{equation_Tridiagonal}
T(i,j) =
\begin{cases}
1 & \text{if }i=j=1\\
2 & \text{if }i=j>1 \\
-1 & \text{if }\vert i- j \vert = 1 \\
0 & \text{otherwise}.
\end{cases}
\end{align}
Note that the Cholesky factorization of $T = LL^T$ is 
\begin{align}
\label{equation_Cholesky_Factor}
L(i,j) =
\begin{cases}
1 & \text{if }i=j\\
-1 & \text{if }i-j=1\\
0 & \text{otherwise},
\end{cases}
\end{align}
where $L \in \mathbb{R}^{n \times n}$. The first step to obtain the
symmetric factorization of $LL^T+UU^T$ is to first factor out $L$ to
obtain
\begin{equation}
LL^T + UU^T = L(I+\tilde{U}\tilde{U}^T) L^T,
\end{equation}
where $\tilde U$ is such that $L\tilde{U} = U$. The next step is to obtain the
symmetric factorization of $I+\tilde{U}\tilde{U}^T$ as
$(I+\tilde{U}X\tilde{U}^T)(I+\tilde{U}X^T\tilde{U}^T)$. Now the
symmetric factorization of $T+UU^T$ is given as
\begin{align}
T+UU^T = 
L(I+\tilde{U}X\tilde{U}^T)
(I+\tilde{U}X\tilde{U}^T)^TL^T.
\end{align}

Figure~\ref{figure_compare} compares the time taken versus system size
for different low-rank perturbation, while Figure~\ref{figure_5} fixes
the system size and plots the time taken to obtain the symmetric
factorization versus the rank of the low-rank perturbation.

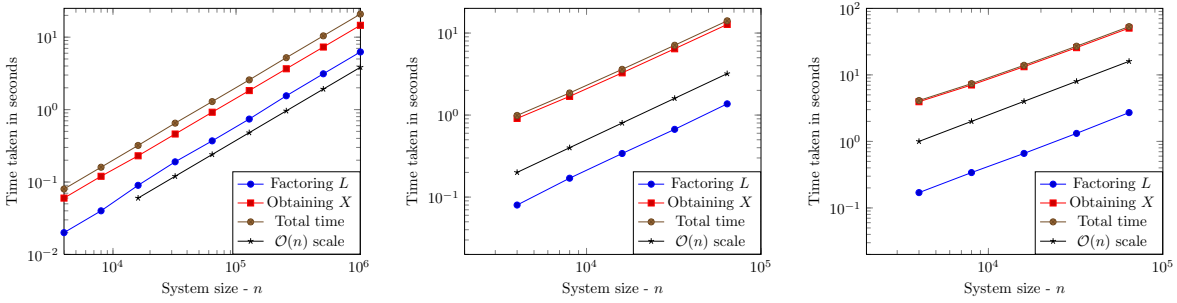
\begin{figure}[!tbp]
\begin{center}
\subfigure{
\begin{tikzpicture}[scale=0.575]
\begin{loglogaxis}[
xmin	=	4000,
xmax=	1025000,
ymin =	0.01,
ymax=	25,
xlabel=	System size - $n$,
ylabel=	Time taken in seconds,
legend style={
at={(1,0)},
anchor=south east}
]
\addplot coordinates {
(4000, 0.02)
(8000, 0.04)
(16000, 0.09)
(32000, 0.19)
(64000, 0.37)
(128000, 0.74)
(256000, 1.55)
(512000, 3.12)
(1024000, 6.25)
};
\addplot coordinates {
(4000, 0.06)
(8000, 0.12)
(16000, 0.23)
(32000, 0.46)
(64000, 0.92)
(128000, 1.83)
(256000, 3.66)
(512000, 7.31)
(1024000, 14.59)
};
\addplot coordinates {
(4000, 0.08)
(8000, 0.16)
(16000, 0.32)
(32000, 0.65)
(64000, 1.29)
(128000, 2.57)
(256000, 5.21)
(512000, 10.43)
(1024000, 20.84)
};
\addplot coordinates {
(16000, 0.06)
(32000, 0.12)
(64000, 0.24)
(128000, 0.48)
(256000, 0.96)
(512000, 1.92)
(1024000, 3.84)
};
\legend{Factoring $L$, Obtaining $X$, Total time, $\mathcal{O}(n)$ scale}
\end{loglogaxis}
\end{tikzpicture}
\label{figure_2}
}
\subfigure{
\begin{tikzpicture}[scale=0.575]
\begin{loglogaxis}[
xmin	=	2000,
xmax=	100000,
ymin =	0.02,
ymax=	20,
xlabel=	System size - $n$,
ylabel=	Time taken in seconds,
legend style={
at={(1,0)},
anchor=south east}
]
\addplot coordinates {
(4000, 0.08)
(8000, 0.17)
(16000, 0.34)
(32000, 0.67)
(64000, 1.37)
};
\addplot coordinates {
(4000, 0.91)
(8000, 1.69)
(16000, 3.27)
(32000, 6.43)
(64000, 12.73)
};
\addplot coordinates {
(4000, 0.99)
(8000, 1.86)
(16000, 3.61)
(32000, 7.1)
(64000, 14.1)
};
\addplot coordinates {
(4000, 0.2)
(8000, 0.4)
(16000, 0.8)
(32000, 1.6)
(64000, 3.2)
};
\legend{Factoring $L$, Obtaining $X$, Total time, $\mathcal{O}(n)$ scale}
\end{loglogaxis}
\end{tikzpicture}
\label{figure_3}
}
\subfigure{
\begin{tikzpicture}[scale=0.575]
\begin{loglogaxis}[
xmin	=	2000,
xmax=	100000,
ymin =	0.02,
ymax=	100,
xlabel=	System size - $n$,
ylabel=	Time taken in seconds,
legend style={
at={(1,0)},
anchor=south east}
]
\addplot coordinates {
(4000, 0.17)
(8000, 0.34)
(16000, 0.66)
(32000, 1.32)
(64000, 2.71)
};
\addplot coordinates {
(4000, 3.95)
(8000, 7.05)
(16000, 13.27)
(32000, 25.67)
(64000, 50.48)
};
\addplot coordinates {
(4000, 4.12)
(8000, 7.39)
(16000, 13.93)
(32000, 26.99)
(64000, 53.19)
};
\addplot coordinates {
(4000, 1)
(8000, 2)
(16000, 4)
(32000, 8)
(64000, 16)
};
\legend{Factoring $L$, Obtaining $X$, Total time, $\mathcal{O}(n)$ scale}
\end{loglogaxis}
\end{tikzpicture}
\label{figure_4}
}
\caption{Time taken (in secs) by the algorithm for Example~\ref{example_2} with different ranks $r$. Left: $r=25$, Middle: $r=100$, Right: $r=200$.}
\label{figure_compare}
\end{center}
\end{figure}
\FloatBarrier

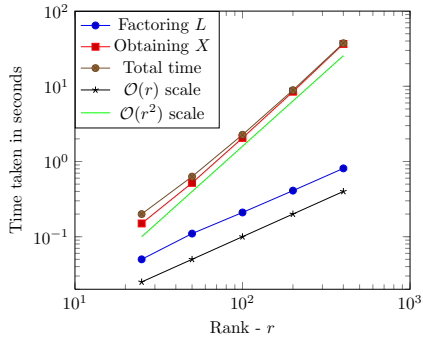
\begin{figure}[!htbp]
\begin{center}
\begin{tikzpicture}[scale=0.65]
\begin{loglogaxis}[
xmin	=	10,
xmax=	1000,
ymin =	0.02,
ymax=	100,
xlabel=	Rank - $r$,
ylabel=	Time taken in seconds,
legend style={
at={(0,1)},
anchor=north west}
]
\addplot coordinates {
(25, 0.05)
(50, 0.11)
(100, 0.21)
(200, 0.41)
(400, 0.81)
};
\addplot coordinates {
(25, 0.15)
(50, 0.52)
(100, 2.06)
(200, 8.48)
(400, 36.61)
};
\addplot coordinates {
(25, 0.2)
(50, 0.63)
(100, 2.27)
(200, 8.89)
(400, 37.42)
};
\addplot coordinates {
(25, 0.025)
(50, 0.05)
(100, 0.1)
(200, 0.2)
(400, 0.4)
};
\addplot[green] coordinates {
(25, 0.1)
(50, 0.4)
(100, 1.6)
(200, 6.4)
(400, 25.6)
};
\legend{Factoring $L$, Obtaining $X$, Total time, $\mathcal{O}(r)$ scale, $\mathcal{O}(r^2)$ scale}
\end{loglogaxis}
\end{tikzpicture}
\caption{Time taken (in secs) by the algorithm for Example~\ref{example_2} with $n=10000$.}
\label{figure_5}
\end{center}
\end{figure}

\subsection{Fast symmetric factorization of HODLR matrices}

We now present numerical benchmarks for applying our symmetric
factorization algorithm to Hierarchical Off-Diagonal Low-Rank Matrices.
In particular, we show results for symmetrically factorizing covariance matrices
and the {\em mobility matrix} encountered in hydrodynamic fluctuations.

The first subsection (Section~\ref{sec-gauss}) contains results for applying the algorithm
to a covariance matrix whose entires are obtained by
evaluating a Gaussian covariance kernel acting on data-points in 
three dimensions. The following subsection (Section~\ref{sec-biharmonic} )
contains analogous results for the covariance matrix obtained
from a biharmonic covariance kernel evaluated on data-points in one and two 
dimensions. In fact, this matrix also occurs when performing
second-order radial basis function interpolation.
In the next subsection (Section~\ref{sec-rotne}), we apply the symmetric factorization to the Rotne-Prager-Yamakawa (RPY) tensor. This tensor
serves as a model for the hydrodynamic forces between spheres of
constant radii. We distribute sphere locations along one-, two-, 
and three-dimensional manifolds all embedded in three dimensions. Finally, in the last subsection (Section~\ref{sec-matern}), we present numerical results for the Mat\'{e}rn covariance kernel.

In all numerical examples, the points in all dimensions are ordered
based on a kd-tree. The low-rank decomposition of the
off-diagonal blocks is obtained using a slight modification of the
adaptive cross
approximation~\cite{rjasanow2002adaptive,zhao2005adaptive} algorithm,
which is essentially a variant of the partially pivoted LU algorithm.

\subsubsection{A Gaussian covariance kernel}
\label{sec-gauss}
Covariance matrices constructed using positive-definite parametric
covariance kernels arise frequently when performing nonparametric
regression using Gaussian processes. Related results for similar
algorithms can be found in~\cite{ambikasaran2014fast}. The entries of
covariance matrices corresponding to a Gaussian covariance kernel are
calculated as
\begin{align}
K(i,j) = \sigma^2 \delta_{ij} + \exp \left(-\Vert r_i
- r_j \Vert^2\right),
\end{align}
where $\sigma^2$ is proportional to the inherent measurement noise in
the underlying regression model. In our numerical experiments, we set
$\sigma = 1$ and distribute the points $r_i$ randomly in the cube
$[-1, 1]^3$.  The timings matrix are presented in Table~\ref{table_Gaussian} and the scaling is presented in Figure~\ref{figure_Gaussian}.

\begin{table}[!htbp]
	\begin{center}
	\caption{Total time in seconds versus system size in different dimensions; The input tolerance is $10^{-12}$ for 1D; $10^{-9}$ for 2D; $10^{-6}$ for 3D;}
	\begin{tabular}{|c|c|c|c|}
		\hline
		\multirow{2}{*}{System size} & \multicolumn{3}{|c|}{Time taken in seconds}\\
		\cline{2-4}
		& 1D & 2D & 3D\\
		\hline
		\hline
		\rowcolor{gray!30}
		$64 \times 10^3$ & $0.74$ & $1.88$ & $4.82$\\
		\hline
		$128 \times 10^3$ & $1.51$ & $3.91$ & $9.84$\\
		\hline
		\rowcolor{gray!30}
		$256 \times 10^3$ & $3.26$ & $8.08$ & $20.51$\\
		\hline
		$512 \times 10^3$ & $6.84$ & $17.37$ & $46.78$\\
		\hline
		\rowcolor{gray!30}
		$1024 \times 10^3$ & $13.85$ & $40.26$ & $101.65$\\
		\hline
	\end{tabular}
	\label{table_Gaussian}
	\end{center}
\end{table}

\begin{figure}[!htbp]
	\subfigure[1D] {
		\includegraphics[width=0.33\textwidth]{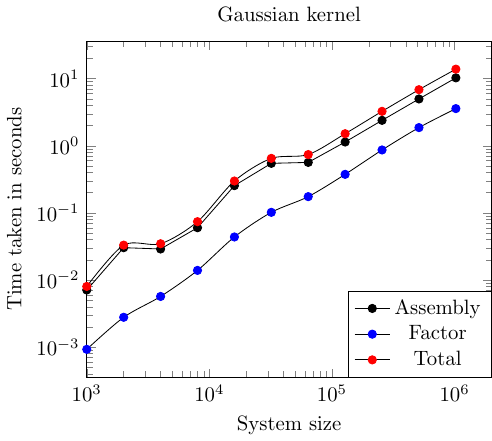}
	}
	\subfigure[2D] {
		\includegraphics[width=0.33\textwidth]{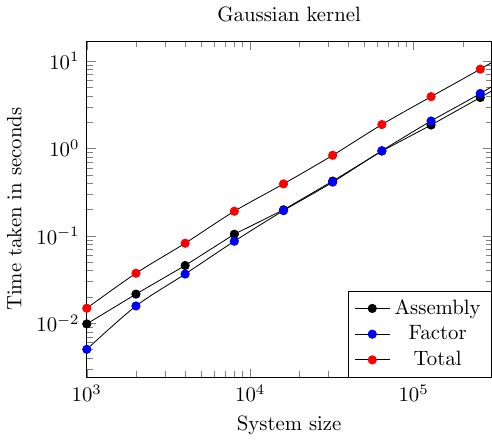}
	}
	\subfigure[3D] {
		\includegraphics[width=0.33\textwidth]{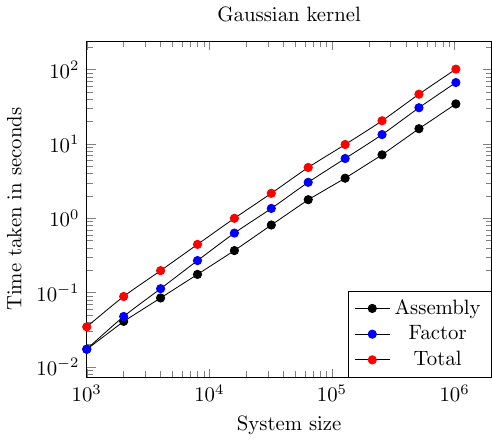}
	}
	\caption{Time taken versus system size for Gaussian kernel in different dimensions; The input tolerance was $10^{-12}$ for 1D; $10^{-9}$ for 2D; $10^{-6}$ for 3D;}
	\label{figure_Gaussian}
\end{figure}

\FloatBarrier

\subsubsection{A biharmonic covariance kernel}
\label{sec-biharmonic}
As in the previous section, a covariance matrices arising in Gaussian
processes can be modeled using the biharmonic covariance kernel, where
the $i,j$ entry is given as
\begin{align}
K(i,j) = \sigma^2 \delta_{ij} + \dfrac{r_{ij}^2}{a^2} \log\left(\dfrac{r_{ij}}a \right),
\end{align}
where the parameters $\sigma$ and $a$ are chosen so that the matrix is
positive-definite. Figure~\ref{figure_biharmonic} presents the results for the Biharmonic kernel in different dimensions.

\begin{figure}[!htbp]
	\subfigure[1D] {
		\includegraphics[width=0.33\textwidth]{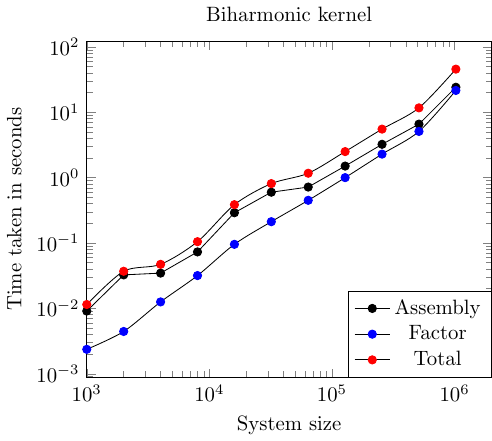}
	}
	\subfigure[2D] {
		\includegraphics[width=0.33\textwidth]{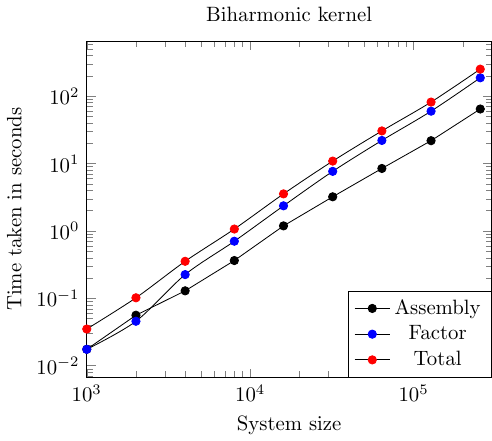}
	}
	\subfigure[3D] {
		\includegraphics[width=0.33\textwidth]{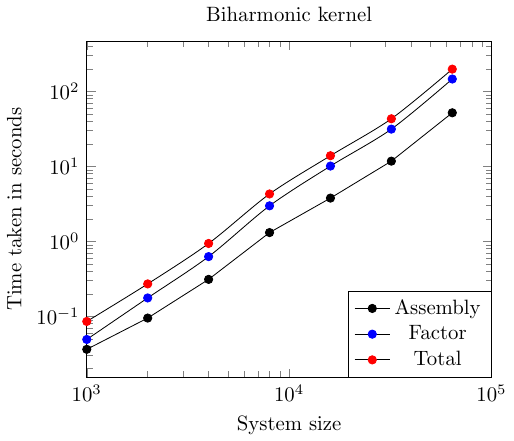}
	}
	\caption{Time taken versus system size for biharmonic kernel in different dimensions; The input tolerance was $10^{-12}$ for 1D; $10^{-9}$ for 2D; $10^{-6}$ for 3D;}
	\label{figure_biharmonic}
\end{figure}

\FloatBarrier

\subsubsection{The Rotne-Prager-Yamakawa tensor}
\label{sec-rotne}
The RPY diffusion tensor is frequently used to model the hydrodynamic
interactions in simulations of Brownian dynamics. The RPY tensor is
defined as
\begin{align}
D_{ij} = \begin{cases} \dfrac{k_BT}{6 \pi \eta a} \left[
    \left(1-\dfrac9{32}\dfrac{r_{ij}}a\right)\mathbf{I} +
    \dfrac3{32a}\dfrac{\mathbf{r}_{ij} \otimes
      \mathbf{r}_{ij}}{r_{ij}}\right] & r<2a,\\ 
\dfrac{k_BT}{8 \pi \eta
    r_{ij}} \left[\mathbf{I} + \dfrac{\mathbf{r}_{ij}
      \otimes \mathbf{r}_{ij}}{r_{ij}^2} +\dfrac{2a^2}{3r_{ij}^2}
    \left( \mathbf{I} - 3\dfrac{\mathbf{r}_{ij} \otimes
      \mathbf{r}_{ij}}{r_{ij}^2} \right)\right] &
  r\geq2a, \end{cases}
\end{align}
where $k_B$ is the Boltzmann constant, $T$ is the absolute
temperature, $\eta$ is the viscosity of the fluid $a$ is the
hydrodynamic radius of the particles, $r_{ij}$ is the distance between
the $i^{th}$ and $j^{th}$ particle and $\mathbf{r}_{ij}$ is the vector
connecting the $i^{th}$ and $j^{th}$ particles. The tensor is obtained as an
approximation to the Stokes flow around two spheres by neglecting the
hydrodynamic rotation-rotation and rotation-translation coupling.  The
resulting matrix is often referred to as the mobility matrix. It has
been shown in~\cite{yamakawa2003transport} that this tensor is
positive-definite for all particle configurations. Fast symmetric
factorizations of the RPY tensor are crucial in Brownian dynamics
simulations. Geyer and Winter~\cite{geyer2009n2} discuss an
$\mathcal{O}(n^2)$ algorithm for approximating the square-root of the
RPY tensor. Jiang et al.~\cite{jiang2013fast} discuss an approximate
algorithm, which relies on a Chebyshev spectral approximation of the
matrix square-root coupled with a FMM. Their method
scales as $\mathcal{O}(\sqrt{\kappa} n)$, where $\kappa$ is the
condition number of the RPY tensor. Our algorithm scales as
$\mathcal{O}(n \log^2 n)$ if the particles are located along a line,
$\mathcal{O}(n^2)$ if the particles are distributed on a surface, and
as $\mathcal{O}(n^{7/3})$, if the particles are distributed in a
three-dimensional volume.

\begin{remark}
Note that since the RPY tensor is singular, on $2$D and $3$D manifolds
the ranks of the off-diagonal blocks would grow as
$\mathcal{O}(n^{1/2})$ and $\mathcal{O}(n^{2/3})$, respectively. Since
the computational cost of the symmetric factorization scales as
$\mathcal{O}(p^2n)$, the computational cost for the symmetric
factorization to scale as $\mathcal{O}(n^2)$ and
$\mathcal{O}(n^{7/3})$ on $2$D and $3$D manifolds, respectively. The
numerical benchmarks, plotted in Figure~\ref{figure_RPY}, also validate this scaling of our algorithm in all
three configurations.
\end{remark}

\begin{figure}[!htbp]
	\subfigure[1D] {
		\includegraphics[width=0.33\textwidth]{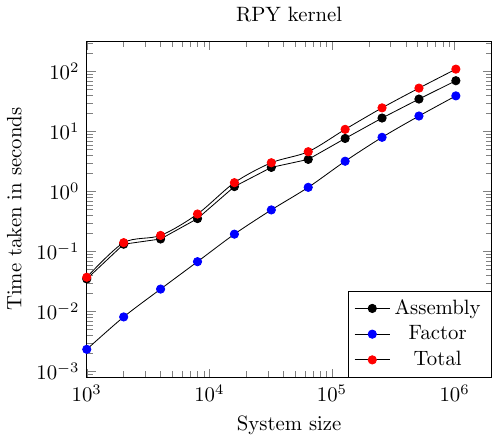}
	}
	\subfigure[2D] {
		\includegraphics[width=0.33\textwidth]{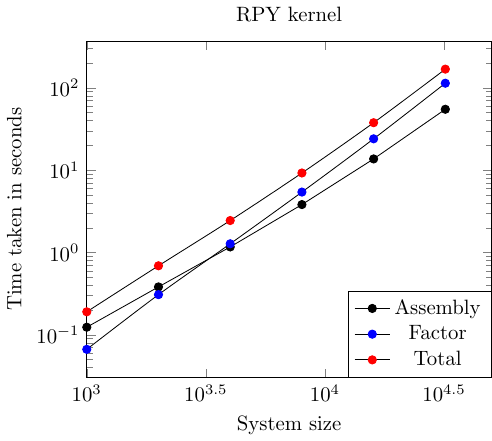}
	}
	\subfigure[3D] {
		\includegraphics[width=0.33\textwidth]{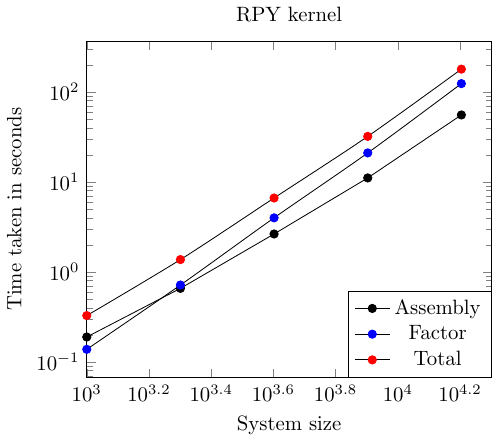}
	}
	\caption{Time taken versus system size for the RPY tensor in different dimensions; The input tolerance was $10^{-12}$ for 1D; $10^{-9}$ for 2D; $10^{-6}$ for 3D;}
	\label{figure_RPY}
\end{figure}

\subsubsection{The Mat\'{e}rn Kernel}
\label{sec-matern}
The Mat\'{e}rn covariance function is frequently used in spatial statistics, geostatistics, Gaussian process regression in machine learning, etc. The covariance function is given by
\begin{align}
	C_{\nu}(r) & = \sigma^2 \dfrac{2^{1-\nu}}{\Gamma(\nu)} \left(\dfrac{r\sqrt{2\nu}}{\rho}\right)^{\nu} K_{\nu}\left(\dfrac{r\sqrt{2\nu}}{\rho}\right)
	\label{eqn_matern}
\end{align}
where $K_{\nu}$ is the modified Bessel function of the second kind, $\rho$, $\nu$ are non-negative parameters. When $\nu$ is a half integer, i.e., $\nu = p+1/2$, then Equation~\eqref{eqn_matern} simplifies to
\begin{align}
	C_{p+1/2}(r) & = \sigma^2 \exp\left(-\dfrac{r\sqrt{2\nu}}{\rho}\right) \dfrac{\Gamma(p+1)}{\Gamma(2p+1)} \displaystyle \sum_{i=0}^p \dfrac{(p+i)!}{i!(p-i)!} \left(\dfrac{r\sqrt{8 \nu}}{\rho}\right)^{p-i}
\end{align}
The expressions for the first few Mat\'{e}rn kernels are displayed in Table~\ref{table_matern_kernels}. The scaling of the time taken as system size for the first two Mat\'{e}rn kernels are plotted in Figure~\ref{figure_Matern_1} and~\ref{figure_Matern_2}.
\begin{table}[!htbp]
	\begin{center}
		\caption{Mat\'{e}rn kernel for first few values of $p$}
	\begin{tabular}{|c|c|}
		\hline
		\hline
		$p=0$ & $\sigma^2 \exp\left(-\dfrac{r}{\rho}\right)$\\
		\hline
		\hline
		$p=1$ & $\sigma^2 \left(1+\dfrac{r\sqrt3}{\rho}\right)\exp\left(-\dfrac{r\sqrt3}{\rho}\right)$\\
		\hline
		\hline
		$p=1$ & $\sigma^2 \left(1+\dfrac{r\sqrt5}{\rho} + \dfrac{5r^2}{3\rho^2}\right)\exp\left(-\dfrac{r\sqrt5}{\rho}\right)$\\
		\hline
		\hline
	\end{tabular}
	\end{center}
	\label{table_matern_kernels}
\end{table}

\begin{figure}[!htbp]
	\subfigure[1D] {
		\includegraphics[width=0.33\textwidth]{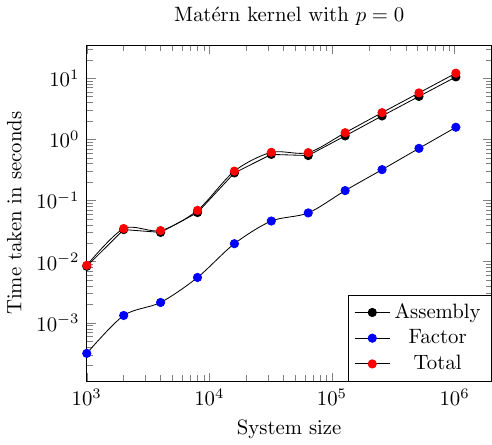}
	}
	\subfigure[2D] {
		\includegraphics[width=0.33\textwidth]{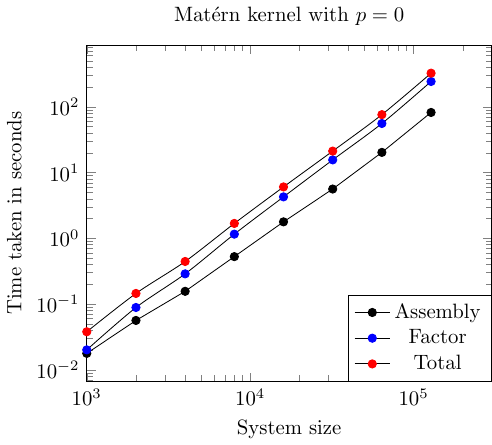}
	}
	\subfigure[3D] {
		\includegraphics[width=0.33\textwidth]{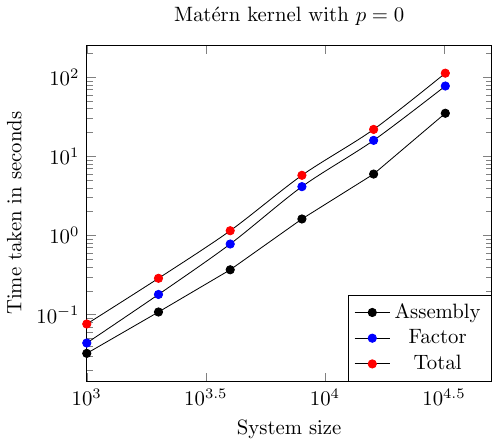}
	}
	\caption{Time taken versus system size for the Mat\'{e}rn kernel with $p=0$ in different dimensions; The input tolerance was $10^{-12}$ for 1D; $10^{-9}$ for 2D; $10^{-6}$ for 3D;}
	\label{figure_Matern_1}
\end{figure}

\begin{figure}[!htbp]
	\subfigure[1D] {
		\includegraphics[width=0.33\textwidth]{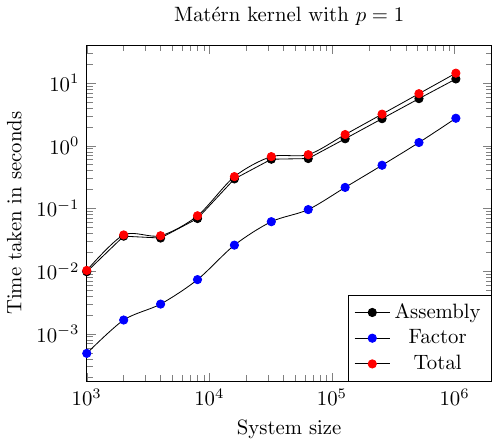}
	}
	\subfigure[2D] {
		\includegraphics[width=0.33\textwidth]{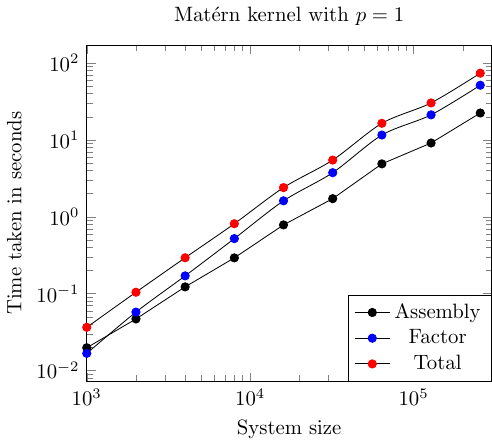}
	}
	\subfigure[3D] {
		\includegraphics[width=0.33\textwidth]{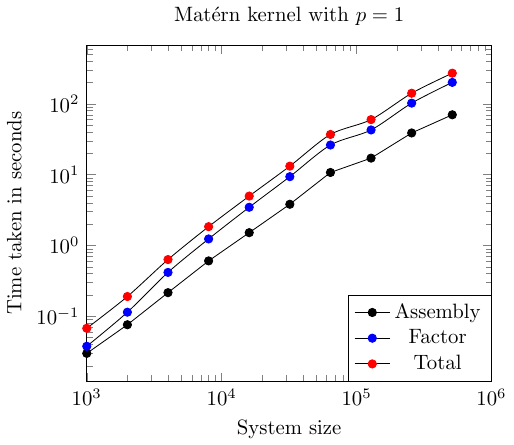}
	}
	\caption{Time taken versus system size for the Mat\'{e}rn kernel with $p=1$ in different dimensions; The input tolerance was $10^{-12}$ for 1D; $10^{-9}$ for 2D; $10^{-6}$ for 3D;}
	\label{figure_Matern_2}
\end{figure}

\FloatBarrier

\section{Conclusion}
\label{section_conclusion}
The article discusses a fast symmetric factorization for a class of
symmetric positive-definite hierarchically structured matrices. Our 
symmetric factorization algorithm is based on two ingredients: a 
novel formula for the symmetric factorization of a low-rank update to
the identity, and a recursive divide-and-conquer strategy compatible
with hierarchically structured matrices.

In the case where the hierarchical structure present is that of
Hierarchically Off-Diagonal Low-Rank matrices, the algorithm scales as
$\mathcal{O}(n \log^2n)$. The numerical benchmarks for dense
covariance matrix examples validate the scaling.  Furthermore, we also
applied the algorithm to the mobility matrix encountered in
Brownian-hydrodynamics, elements of which are computed from the
Rotne-Prager-Yamakawa tensor.  In this case, since the ranks of
off-diagonal blocks scale as $\mathcal{O}(n^{2/3})$, when the particles
are on a three-dimensional manifold, the algorithm scales as
$\mathcal{O}(n^{7/3})$.  Obtaining an $\mathcal{O}(n)$ symmetric
factorization for the mobility matrix is a subject of ongoing research
within our group.

It is also worth noting that with nested low-rank
basis of the off-diagonal blocks, i.e., if the HODLR matrices are
assumed to have a Hierarchical Semi-Separable structure (HSS) instead, then
the computational cost of the algorithm would scale as
$\mathcal{O}(p^2n)$. Extensions to this case is relatively straightforward.


\bibliographystyle{plain}

\end{document}